\newtheorem{thm}{Theorem}[section]
\newtheorem{prop}[thm]{Proposition}
\newtheorem{conj}[thm]{Conjecture}
\newtheorem{cor}[thm]{Corollary}
\theoremstyle{definition}
\newtheorem{defn}[thm]{Definition}
\newtheorem{rem}[thm]{Remark}
\newtheorem{exam}[thm]{Example}
\newcommand{\GL}{\mathrm{GL}}
\newcommand{\SL}{\mathrm{SL}}
\newcommand{\FF}{\mathbb{F}}
\newcommand{\ZZ}{\mathbb{Z}}
\newcommand{\CC}{\mathbb{C}}
\newcommand{\QQ}{\mathbb{Q}}
\newcommand{\TT}{\mathbb{T}}
\newcommand{\frob}{\mathrm{Frob}}
\newcommand{\End}{\mathrm{End}}
\newcommand{\rar}{\rightarrow}
\newcommand{\pd}{\text{.}}
\newcommand{\cm}{\text{,}}
\begin{document}

\title[Detecting simple Hecke modules via congruences]{Detecting large simple rational Hecke modules for $\Gamma_0(N)$ via congruences}

\author{Michael Lipnowski}\address{Michael Lipnowski, Department of Mathematics, University of Toronto.}\email{malipnow@math.utoronto.edu}
\author{George J. Schaeffer}\address{George J. Schaeffer, Department of Mathematics, Stanford University.}\email{gschaeff.research@gmail.com}

\begin{abstract}We describe a novel method for bounding the dimension $d$ of the largest simple Hecke submodule of $S_2(\Gamma_0(N);\QQ)$ from below. Such bounds are of interest because of their relevance to the structure of $J_0(N)$, for instance.

In contrast with previous results of this kind, our bound does not rely on the equidistribution of Hecke eigenvalues. Instead, it is obtained via a Hecke-compatible congruence between the target space and a space of modular forms whose Hecke eigenvalues are easily controlled. For prime levels $N\equiv 7\mod 8$ our method yields an unconditional bound of $d\ge\log_2\log_2(\frac{N}{8})$, improving the known bound of $d\gg\sqrt{\log\log N}$ due to Murty--Sinha and Royer. We also discuss conditional bounds, the strongest of which is $d\gg_\epsilon N^{1/2-\epsilon}$ over a large set of primes $N$, contingent on Soundararajan's heuristics for the class number problem and Artin's conjecture on primitive roots.

We also propose a number of Maeda-style conjectures based on our data, and we outline a possible congruence-based approach toward the conjectural Hecke simplicity of $S_k(\SL_2(\ZZ);\QQ)$.\end{abstract}

\maketitle

\section{Introduction}

\subsection{Maeda's conjecture and asymptotic simplicity}\label{Intro}

It is natural to ask how spaces of modular forms decompose under the actions of their Hecke algebras.\footnote{Throughout this article, when $V$ is a space of modular forms of level $N$, then by ``the Hecke algebra of $V$'' we always mean the (commutative) subalgebra $\TT(V)$ of $\End(V)$ generated by the Hecke operators $T_n$ for $n$ {\it relatively prime} to $N$. This is sometimes referred to as the {\it anemic} Hecke algebra. If $V$ is clear from context we may denote its Hecke algebra simply by $\TT$.

} Maeda's conjecture attempts to answer this question in the level $1$ case: It posits that for all even integers $k\ge 4$ the $\QQ$-vector space of weight $k$ cusp forms for $\Gamma(1)=\SL_2(\ZZ)$, here denoted $S_k(\Gamma(1);\QQ)$, is a {\it simple} Hecke module. In fact, Maeda conjectured that this space is simple under the action of $T_2$.

Interesting arithmetic applications of Maeda's conjecture have been noted by many authors: See Hida--Maeda \cite{HM}, Conrey--Farmer \cite{CF}, and Kohnen--Zagier \cite{KZ} for a small sample. Computational evidence for Maeda's conjecture has been given by Buzzard \cite{Buzzard}, Farmer--James \cite{FJ}, and Ghitza--McAndrews \cite{Ghitza}, among others. At the time of writing, Maeda's conjecture has been verified for weights $k\le 14000$.


Another well-studied sequence of $\QQ$-vector spaces of modular forms is
\begin{equation}\tag{$*$}\label{interestingsequence}\{\,S_2(\Gamma_0(N);\QQ):\text{$N$ prime}\,\}\text{.}\end{equation}
The Hecke module decompositions of the terms in this sequence have attracted interest because of established bijective correspondences between

\begin{itemize}\item[i.]Simple Hecke submodules $V'\subseteq S_2(\Gamma_0(N);\QQ)$ such that $\dim_{\QQ}(V')=d$;
\item[ii.]$\mathrm{Gal}(\overline{\QQ}/\QQ)$-orbits $\mathcal{O}$ of normalized cuspidal eigenforms in $S_2(\Gamma_0(N);\overline{\QQ})$ such that $|\mathcal{O}|=d$; and
\item[iii.]Simple isogeny factors $A$ of $J_0(N)$, the Jacobian of the modular curve $X_0(N)$, such that $\dim_{\QQ}(A)=d$.\end{itemize}


In contrast with what Maeda predicts in the level $1$ setting, the terms in the sequence \eqref{interestingsequence} are {\it almost never} simple Hecke modules. The principal obstruction to simplicity in this case is the existence of the level $N$ {\it Atkin--Lehner involution} $w_N:f(\tau)\mapsto f(-\frac{1}{N\tau})$ on $S_2(\Gamma_0(N);\QQ)$  \cite{AL}. Since the nebentypus is trivial, $w_N$ commutes with the Hecke operators, so the $\pm1$-eigenspaces of $w_N$ are themselves Hecke submodules of $S_2(\Gamma_0(N);\QQ)$ and we obtain a decomposition
\[S_2(\Gamma_0(N);\QQ)=S_2(\Gamma_0(N);\QQ)^+\oplus S_2(\Gamma_0(N);\QQ)^-\]
of {\it Hecke modules}. Both factors in this decomposition are nontrivial for $N$ sufficiently large.\footnote{In fact, both factors in this decomposition are nontrivial as long as $$N \neq 2,3,5,7,11,13,17,19,29,31,41,47,59,71\pd$$} The difference $\dim_{\QQ}S_2(\Gamma_0(N);\QQ)^+ -\dim_{\QQ}S_2(\Gamma_0(N);\QQ)^-$ can be expressed explicitly in terms of class numbers of imaginary quadratic fields, and well-known class number bounds then imply that this difference is $\ll_\epsilon N^{1/2+\epsilon}$. It follows that
\[\lim_{N\rightarrow\infty}\left(\frac{\dim_\QQ S_2^+(\Gamma_0(N);\QQ)}{\dim_\QQ S_2(\Gamma_0(N);\QQ)}\right)=\lim_{N\rightarrow\infty}\left(\frac{\dim_\QQ S_2^-(\Gamma_0(N);\QQ)}{\dim_\QQ S_2(\Gamma_0(N);\QQ)}\right)=\frac{1}{2}\text{.}\]


By way of illustration, we tabulate the dimensions of the Hecke-simple factors of $S_2(\Gamma_0(N);\QQ)^{\pm}$ for prime levels $N \in [10007,10103]$ below (data over a broader range are given in Table 3 of \S \ref{almostsimplicity}): 

\begin{center}\begin{tabular}{|l|c|c||l|c|c|}\hline $N$ & $S_2(\Gamma_0(N);\QQ)^+$ & $S_2(\Gamma_0(N);\QQ)^-$ & $N$ & $S_2(\Gamma_0(N);\QQ)^+$ & $S_2(\Gamma_0(N);\QQ)^-$\\\hline\hline
$10007$ & $[455]$ & $[379]$ & $10069$ & $[432]$ & $[406]$\\\hline
$10009$ & $[440]$ & $[393]$ & $10079$ & $[486][1]$ & $[353]$\\\hline
$10037$ & $[438]$ & $[398]$ & $10091$ & $[458][1]$ & $[382]$\\\hline
$10039$ & $[452]$ & $[384]$ & $10093$ & $[430]$ & $[410]$\\\hline
$10061$ & $[454]$ & $[383][1]$ & $10099$ & $[443][1][1]$ & $[396]$\\\hline
$10067$ & $[440]$ & $[399]$ & $10103$ & $[446]$ & $[394][2]$\\\hline
\end{tabular}\smallskip

{\sc Table 1.} Dimensions of simple Hecke submodules of $S_2(\Gamma_0(N);\QQ)$.

\end{center}

While $S_2(\Gamma_0(N);\QQ)$ itself is almost never a simple Hecke module, data such as those in the table above suggest that the Atkin--Lehner eigenspaces $S_2(\Gamma_0(N);\QQ)^\pm$ are {\it often (but not always)} simple. When they are {\it not} simple, it is due to the existence of small sporadic simple factors corresponding via modularity to elliptic curves and other low-dimensional abelian varieties with good reduction away from $N$.  In Table 1, the $1$-dimensional simple Hecke submodules correspond to elliptic curves of conductor equal to the level; the $2$-dimensional Hecke submodule at level 10103 corresponds to an abelian surface of conductor $10103^2$ having multiplication by an order in $\mathrm{Mat}_2\,\QQ(\sqrt{5})$. In any case, one observes in the table that both $S_2(\Gamma_0(N);\QQ)^+$ and $S_2(\Gamma_0(N);\QQ)^-$ always contain a {\it large} simple Hecke submodule. Further computation seems to indicate that this ``almost-simplicity'' phenomenon continues as $N$ grows.


In addition to the two sequences discussed so far, there are other sequences of $\QQ$-vector spaces of modular forms whose terms seem to consistently admit large simple Hecke submodules. To ease notation, if $V$ is a vector space over a field $F$ and $A$ is an $F$-subalgebra of $\End(V)$, we set
\[\dim_{F,A}^*V=\max\{\,\dim_ F V'\,:\,\text{$V'$ is an $ F$-subspace of $V$ and a simple $A$-module}\,\}\]
For example, according to Table 1, $\dim_{\QQ,\TT}^*S_2(\Gamma_0(10061);\QQ)^-=383$.

\begin{defn}\label{asymptoticsimplicity}Let $\{V_i\}_i$ be a sequence of vector spaces over a fixed field $ F$ and for each $i$, let $A_i$ be an $ F$-subalgebra of $\End(V_i)$. We say that $\{V_i\}_i$ is {\it asymptotically simple} (with respect to $\{A_i\}_i$) if
\[\lim_{i\rightarrow\infty}\left(\frac{\dim_{ F,A_i}^*V_i}{\dim_{ F}V_i}\right)=1\text{.}\]


\end{defn}

Maeda's conjecture implies that the sequence $\{S_k(\Gamma(1);\QQ)\}_k$ is asymptotically simple (with respect to the terms' Hecke algebras). We conjecture further that any sequence of spaces of modular forms with primitive nebentypen and with either the weight or the level tending to infinity is asymptotically Hecke-simple.

\begin{conj}\label{primitivecharacterconj}Let $\{(k_i,N_i,\chi_i)\}_i$ be a sequence of triples such that all $k_i$ and $N_i$ are positive integers and $\chi_i:(\ZZ/N_i\ZZ)^\times\rightarrow\CC^\times$ is primitive character satisfying $\chi_i(-1)=(-1)^{k_i}$. Let $L=\QQ(\{\chi_i\}_i)$.

\begin{itemize}\item[a.] If there is a fixed positive integer $N$ and a fixed primitive $\chi:(\ZZ/N\ZZ)^\times\rightarrow\CC^\times$ such that $(N_i,\chi_i)=(N,\chi)$ for all $i$, and $\{k_i\}_i$ is an increasing sequence; or
\item[b.] If there is a fixed integer $k\ge 2$ such that $k_i=k$ for all i, $\{N_i\}_i$ is an increasing sequence, and $L/\QQ$ is a finite extension;\end{itemize}
then the sequence $\{S_{k_i}(N_i,\chi_i;L)\}_i$ of Hecke modules over $L$ is asymptotically simple.\end{conj}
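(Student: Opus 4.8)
The plan is to reformulate the conjecture as a statement about the structure of the Hecke algebra and then to attack it with a congruence argument in the spirit of this paper's method. Write $V_i=S_{k_i}(N_i,\chi_i;L)$ and let $\TT_i\subseteq\End(V_i)$ be its (anemic) Hecke algebra. Since each $\chi_i$ is primitive modulo $N_i$, the character is induced from no proper divisor, so $V_i$ consists entirely of newforms; by strong multiplicity one and the bijections recalled in the introduction, $\TT_i\cong\prod_{\mathcal{O}}K_{\mathcal{O}}$ as $L$-algebras, the product running over Galois orbits $\mathcal{O}$ of normalized eigenforms with coefficient fields $K_{\mathcal{O}}=L(\{a_n(\mathcal{O})\})$. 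Here $\sum_{\mathcal{O}}[K_{\mathcal{O}}:L]=\dim_L V_i$ and $\dim_{L,\TT_i}^* V_i=\max_{\mathcal{O}}[K_{\mathcal{O}}:L]$, so asymptotic simplicity is precisely the assertion that a single orbit accounts for all but an $o(1)$ proportion of the total dimension. The mechanism I would use to force one degree $[K_{\mathcal{O}}:L]$ to be large is reduction modulo a prime $\mathfrak{l}$ of $L$ over a rational prime $\ell$: fixing $p\nmid N_i\ell$, each factor $K_{\mathcal{O}}$ contributes to $\mathrm{charpoly}(T_p\mid V_i)$ the minimal polynomial of $a_p(\mathcal{O})$, and an irreducible factor of degree $e$ over the residue field $k_{\mathfrak{l}}$ in the reduction can only come from an orbit admitting a prime of residue degree $e$, whence $[K_{\mathcal{O}}:L]\ge e$ and $\dim_{L,\TT_i}^* V_i\ge e$. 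It therefore suffices to exhibit, for each $i$, a prime $\mathfrak{l}$ and an operator $T_p$ whose mod-$\mathfrak{l}$ characteristic polynomial has an irreducible factor of degree $(1-o(1))\dim_L V_i$.

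This is where the congruence method enters as the engine. Rather than analyzing $V_i$ directly, I would construct at a well-chosen $\mathfrak{l}$ a Hecke-equivariant congruence between $V_i$ and an auxiliary space $W_i$ whose mod-$\mathfrak{l}$ Hecke eigensystems are transparent — for instance a space of Eisenstein series, of theta series, or of forms of small weight reached via the $\theta$-operator and Hida theory — so that the reduction of $\mathrm{charpoly}(T_p\mid V_i)$ is governed by the computable reduction of $\mathrm{charpoly}(T_p\mid W_i)$. The advantage, exactly as exploited for the unconditional bound for $N\equiv 7\bmod 8$, is that the residue degrees of the eigensystems of $W_i$ can be read off from arithmetic data (class numbers, orders of units, and the like) without invoking equidistribution of Hecke eigenvalues.

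The two cases call for different auxiliary spaces. In case (a), where $\ell$ may be held fixed while $k_i\to\infty$, I would use weight-lowering and filtration congruences to transport the problem to a fixed small weight and let the degree of the residue field cut out by the relevant eigensystem grow with $k_i$. In case (b), where $k$ is fixed, $N_i\to\infty$, and $L/\QQ$ is finite — so that only finitely many residue characteristics are in play and a single $\mathfrak{l}$ can be reused — I would instead fix $\mathfrak{l}$ and deploy level-raising congruences, again arranging that the residue degree of the mod-$\mathfrak{l}$ eigensystem grows with $\dim_L V_i$.

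The decisive, and genuinely open, obstacle is quantitative. Producing an irreducible factor that is merely \emph{large} is already the content of this paper's results, which reach only degree $\log_2\log_2(N/8)$ unconditionally and $N^{1/2-\epsilon}$ under strong hypotheses. Pushing the degree all the way to $(1-o(1))\dim_L V_i$ requires controlling the \emph{entire} factorization of the reduced characteristic polynomial, not just one factor: one must show that, up to $o(\dim_L V_i)$ exceptions, the mod-$\mathfrak{l}$ eigenvalues lie in a single long Galois orbit over $k_{\mathfrak{l}}$. This is tantamount to a near-maximal Galois-group (equivalently, a horizontal genericity) statement for the characteristic polynomials of $T_p$ — precisely the input that is unavailable unconditionally and that the congruence method is designed to sidestep rather than to prove. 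Closing this gap, i.e.\ upgrading a large simple factor to an almost-all simple factor, is the crux of the conjecture and the reason it remains out of reach.
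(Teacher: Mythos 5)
The statement you set out to prove is Conjecture \ref{primitivecharacterconj}, which the paper does not prove: it is offered as an open conjecture, and the authors say explicitly that they ``know no general method for constructing simple Hecke modules of the magnitude required'' to approach it, presenting Theorem \ref{largesimplesubmoduleviaweight1proof} only as partial progress. So there is no proof in the paper to compare against, and your proposal --- as you yourself concede in your final paragraph --- is not a proof either. To give credit where due: your reformulation is sound. Primitivity of $\chi_i$ does force every eigenform to be new, the anemic Hecke algebra is then an \'etale product $\prod_{\mathcal{O}}K_{\mathcal{O}}$ of fields indexed by Galois orbits, asymptotic simplicity is exactly the statement that one orbit has density $1-o(1)$ in $\dim_L V_i$, and the residue-degree mechanism (an irreducible factor of degree $e$ of the mod-$\mathfrak{l}$ reduction of $\mathrm{charpoly}(T_p)$ forces some $[K_{\mathcal{O}}:L]\ge e$) is precisely the lifting step in the proof of Theorem \ref{largesimplesubmoduleviaweight1proof}.

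The gap is everything after that, and it is fatal in two concrete ways beyond the one you acknowledge. First, the engine you propose cannot reach the required threshold even in principle: a congruence with a space of dihedral/theta or Eisenstein type certifies a residue degree bounded by the auxiliary arithmetic invariant, and the paper itself notes that any bound obtained this way can never exceed the class number of $\QQ(\sqrt{-N})$, hence is $\ll_\epsilon N^{1/2+\epsilon}$; since $\dim_L V_i$ grows linearly in $N_i$ (for fixed weight), the certified factor is always $o(\dim_L V_i)$, never $(1-o(1))\dim_L V_i$ --- this is why even the strongest conditional bound in Corollary \ref{smorgasbordofconjectures} is $N^{1/2-\epsilon}$. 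Second, your plan for case (a) fails outright as stated: with the level $N$ and the residue characteristic $\ell$ both fixed, there are only \emph{finitely many} mod-$\ell$ Hecke eigensystems of level $N$ across all weights (Jochnowitz; equivalently, finiteness of semisimple mod-$\ell$ Galois representations of bounded conductor, with Serre weights reducible to a finite range by the $\theta$-cycle/filtration theory you invoke), so the degrees of irreducible factors of the reduced characteristic polynomials are bounded by a constant depending only on $\ell$ and $N$ and cannot ``grow with $k_i$''; you would be forced to let $\ell\to\infty$ with $k_i$, which reintroduces exactly the uniformity problem the congruence method was supposed to sidestep. Finally, as you correctly diagnose, upgrading ``one large simple factor'' to ``a simple factor of density $1-o(1)$'' requires controlling essentially the entire factorization of the characteristic polynomial, a genericity statement that no congruence of this type implies. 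In short: your framing of the conjecture and your diagnosis of the obstruction are both correct and consistent with the paper's own assessment, but what you have written is a research plan whose decisive step is missing, not a proof --- which is the honest state of the art, since the paper has no proof of this statement either.
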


We also conjecture that the Atkin--Lehner eigenspaces of $S_k(\Gamma_0(N);\QQ)$ for fixed even weights $k\ge 2$ and varying prime levels $N$ are asymptotically simple.

\begin{conj} \label{fixedweighttrivialnebentypusincreasinglevel}
Let $k$ be even and $k\ge 2$. The sequences\[\{\,S_k(\Gamma_0(N);\QQ)^+:\text{$N$ prime}\,\}\quad\text{and}\quad\{\,S_k(\Gamma_0(N);\QQ)^-:\text{$N$ prime}\,\}\]of Hecke modules over $\QQ$ are asymptotically simple.
\end{conj}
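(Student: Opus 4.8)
The plan is to attack this through the congruence machinery underlying the paper's main bounds, pushed to its conceptual limit. By the bijective correspondence between simple Hecke submodules (i) and $\mathrm{Gal}(\overline{\QQ}/\QQ)$-orbits of eigenforms (ii), the assertion that $\dim_{\QQ,\TT}^* S_k(\Gamma_0(N);\QQ)^\pm / \dim_\QQ S_k(\Gamma_0(N);\QQ)^\pm \to 1$ is equivalent to producing, for each prime $N$, an eigenform $f$ in the relevant Atkin--Lehner eigenspace whose Hecke eigenvalue field $\QQ(\{a_n(f)\})$ has degree $(1-o(1))\dim_\QQ S_k(\Gamma_0(N);\QQ)^\pm$. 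I would first reduce this to a statement about a single Hecke operator: it suffices to exhibit a prime $p\nmid N$ for which the characteristic polynomial of $T_p$ acting on $S_k(\Gamma_0(N);\QQ)^\pm$ has an irreducible factor of degree $(1-o(1))\dim_\QQ S_k(\Gamma_0(N);\QQ)^\pm$, since such a factor forces a Galois orbit of at least that size and thus makes the ratio in Definition \ref{asymptoticsimplicity} tend to $1$.

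Second, I would run the congruence engine to control this characteristic polynomial modulo a well-chosen auxiliary prime $\ell$. Following the method that yields the unconditional $\log_2\log_2(N/8)$ and conditional $N^{1/2-\epsilon}$ bounds in weight $2$, one establishes a Hecke-equivariant congruence identifying the mod-$\ell$ reduction of the integral Hecke module attached to $S_k(\Gamma_0(N);\QQ)^\pm$ with an auxiliary module (of Brandt/theta or Eisenstein type) whose $T_p$-eigenvalues are explicitly computable. The favorable direction is that reduction modulo $\ell$ can only refine a factorization: if the reduced characteristic polynomial $\overline{\mathrm{charpoly}}(T_p)\in\FF_\ell[x]$ has an irreducible factor of degree $m$, then the integral characteristic polynomial has an irreducible factor of degree at least $m$, whence $\dim_{\QQ,\TT}^* S_k(\Gamma_0(N);\QQ)^\pm \ge m$. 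Asymptotic simplicity would follow if, for a density-one set of primes $N$, one could choose $(p,\ell)$ so that the explicitly controlled eigenvalue system forces $\overline{\mathrm{charpoly}}(T_p)$ to be irreducible, or nearly so, of essentially full degree. Since no single congruence will achieve this, the strategy is to saturate: vary $p$, $\ell$, and the auxiliary module so that each new congruence obstructs a potential splitting of the Hecke field, ruling out medium-sized Galois orbits until only one large orbit can survive.

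The main obstacle, and the reason this remains conjectural, is precisely this saturation step. The congruence method controls eigenvalues one prime $\ell$ at a time, and the degree it certifies is bounded by the residue degree of the controlled eigenvalue --- a quantity governed in every known instance by a multiplicative order (hence only logarithmic in $N$) or by the $\ell$-part of an imaginary quadratic class number (hence at best $N^{1/2}$), both of which are $o(N)$ and therefore $o(\dim_\QQ S_k(\Gamma_0(N);\QQ)^\pm)$. Producing one large factor is comparatively easy; the genuine difficulty is excluding the many moderate factors that could a priori coexist. Deciding whether the Hecke algebra fragments in this way is tantamount to the equidistribution-type input that the method was expressly designed to avoid, and closing the gap appears to demand fundamentally new arithmetic --- for instance, unconditional lower bounds on the largest prime factor of $h(-N)$, or horizontal non-vanishing statements for the degrees of the Hecke eigenvalue fields --- that lies beyond the congruence technique in its present form.
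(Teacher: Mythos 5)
The statement you set out to prove is Conjecture \ref{fixedweighttrivialnebentypusincreasinglevel}: it is a \emph{conjecture} in the paper, stated precisely because no proof is known, so there is no proof of the authors' to compare yours against. Your submission, to its credit, does not claim otherwise --- after outlining a congruence strategy you concede that the ``saturation'' step is out of reach and that the statement remains conjectural. That concession is the correct assessment, but it also means what you have written is a strategy document, not a proof; the gap is the one you name yourself, and it is not a technical gap that more care would close but the entire content of the conjecture.

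Your diagnosis of the obstruction is accurate and agrees with the paper's own discussion. The target dimension $\dim_\QQ S_k(\Gamma_0(N);\QQ)^\pm$ grows linearly in $N$ (for $k=2$ it is roughly $N/24$), whereas any bound extracted from the congruence with dihedral weight-$1$ forms is capped by the class number of $\QQ(\sqrt{-N})$, hence by $\ll_\epsilon N^{1/2+\epsilon}$; the paper makes exactly this point after Corollary \ref{smorgasbordofconjectures}, observing that $N^{1/2-\epsilon}$ is essentially the best bound accessible to the method even under (S) and (A). Since $N^{1/2+\epsilon}=o(N)$, no amount of varying $p$, $\ell$, or the auxiliary module within this framework can certify a simple submodule of dimension $(1-o(1))\dim_\QQ S_k(\Gamma_0(N);\QQ)^\pm$. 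Three smaller corrections to your outline: the paper's auxiliary space is $S_1(N,\tilde{\varepsilon}_N;\FF_p)$ mapped in by multiplication by the Hasse invariant, not a Brandt/theta or Eisenstein module (though such congruences exist elsewhere); the paper's congruence lands in trivial-nebentypus weight $2$ only for $p=2$, and for odd $p$ it concerns $S_p(N,\varepsilon_N;\QQ)$, so it says nothing directly about even weights $k\ge 4$ appearing in the conjecture; and the method as implemented does not distinguish the two Atkin--Lehner eigenspaces, so even a hypothetical full-strength version would need an extra argument to place the large simple module in a prescribed eigenspace. Your reduction of asymptotic simplicity to a single characteristic polynomial of $T_p$ having a near-full-degree irreducible factor, and your observation that an irreducible factor of the mod-$\ell$ reduction forces an integral irreducible factor of at least that degree, are both sound --- they are the same reductions the paper uses in proving Theorem \ref{largesimplesubmoduleviaweight1proof} --- but they transfer the difficulty rather than resolve it.
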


While we avoid formal generalizations for brevity, the above conjectures can be adapted appropriately to cases where $N$ is composite squarefree and/or the nebentypen are imprimitive, if one accounts for Hecke module decompositions induced by the Atkin--Lehner involutions.

We should add that Tsaknias \cite{T} and Dieulefait--Tsaknias \cite{DT} have recently proposed conjectures that predict an exact count of the Galois orbits of normalized eigenforms in $S_k(\Gamma_0(N);\overline{\QQ})^{\mathrm{new}}$ for {\it arbitrary} fixed $N$ and $k$ sufficiently large. Their prediction in the case of prime $N$ is that there are {\it two} Galois orbits as $k\rightarrow\infty$ (corresponding to the two Atkin--Lehner eigenspaces), which is certainly compatible with our Conjecture \ref{fixedweighttrivialnebentypusincreasinglevel}.

\subsection{Main results and methods}Though we know no general method for constructing simple Hecke modules of the magnitude required to approach Conjectures \ref{primitivecharacterconj} and \ref{fixedweighttrivialnebentypusincreasinglevel}, our main result, Theorem \ref{largesimplesubmoduleviaweight1proof}, certainly represents progress. It is perhaps especially encouraging in light of the conditional bounds given in Corollary \ref{smorgasbordofconjectures}.

Some notation: If $k\ge 0$ and $N\ge 1$ are integers, $F$ is a field, and $\chi:(\ZZ/N\ZZ)^\times\rightarrow F^\times$ is a character, then by $M_k(N,\chi; F)$ we mean the $ F$-vector space of weight $k$ {\it Katz modular forms} of level $N$ and character $\chi$ \cite{Katz}. Its subspace of cusp forms is denoted $S_k(N,\chi; F)$.\footnote{When $\mathrm{char}\,F=0$, the space of Katz modular forms over $F$ can be obtained from the corresponding space over $\QQ$ via extension of scalars.  However, not all weight $1$ Katz modular forms over $\FF_p$ can be obtained by mod-$p$ reduction of classical modular forms with coefficients in $\ZZ$ \cite[\S4]{Khare} \cite[\S5.1]{Schaeffer}.}


When $R$ is a subring of $F$ and $N$ is invertible in $R$, then $S_k(N,\chi;R)$ as defined in \cite{Katz} coincides with the $R$-submodule of $S_k(N,\chi;F)$ consisting of those forms whose $q$-expansions land in $R[[q]]$.

When $\chi$ is trivial, we will usually replace the $(N,\chi)$ in our notation for spaces of modular forms with the congruence subgroup $\Gamma_0(N)$. Finally, in what follows, $\varepsilon_N$ will denote the quadratic character of conductor $N$.

\begin{thm} \label{largesimplesubmoduleviaweight1proof}
Let $N$ be squarefree with $N\equiv 3\mod 4$ so that $\varepsilon_N(-1)=-1$, and let $p$ be a prime not dividing $N$. Set
\[d_p(N)=\left\{\begin{array}{ll}\dim_{\QQ,\TT}^*S_2(\Gamma_0(N);\QQ)&\text{when $p=2$, and}\\\dim_{\QQ,\TT}^*S_p(N,\varepsilon_N;\QQ)&\text{when $p$ is odd.}\end{array}\right.\]

Suppose that the class group of $\QQ(\sqrt{-N})$ has a cyclic quotient of order $r$, where $r$ is not divisible by $p$, and let $m$ be the order of $p$ in $(\ZZ/r\ZZ)^\times/\{\pm 1\}$. If $m>2$, then $d_p(N)\ge m$.

\end{thm}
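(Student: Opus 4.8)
The plan is to realize the desired simple Hecke submodule through a mod-$p$ congruence with a weight-one dihedral eigenform attached to a character of the class group of $K=\QQ(\sqrt{-N})$. Fix a surjection $\mathrm{Cl}(K)\twoheadrightarrow\ZZ/r\ZZ$ and compose it with a faithful character of $\ZZ/r\ZZ$ to obtain an unramified class character $\psi$ of exact order $r$. Since $N\equiv 3\bmod 4$ and $N$ is squarefree, $K$ has discriminant $-N$, and $\psi$ gives rise to a cuspidal CM eigenform $\theta_\psi=\sum_{\mathfrak a}\psi(\mathfrak a)q^{N(\mathfrak a)}\in S_1(N,\varepsilon_N;\CC)$, the theta series of $\psi$. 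Because $\mathfrak a\bar{\mathfrak a}$ is principal, for a split prime $\ell$ one has $a_\ell(\theta_\psi)=\psi(\mathfrak l)+\psi(\mathfrak l)^{-1}\in\QQ(\zeta_r)^+$, while $a_\ell=0$ at inert $\ell$; as $r>2$ the induced representation is irreducible, $\theta_\psi$ is a genuine newform, and its Hecke eigenvalues generate exactly the real cyclotomic field $\QQ(\zeta_r)^+$. This is the ``space whose eigenvalues are easily controlled.''

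Next I would reduce mod $p$ and transport the eigenvalues into the target weight. Choosing a prime $\mathfrak p\mid p$ of $\QQ(\zeta_r)^+$ (recall $p\nmid r$, so reduction is injective on $\mu_r$), the eigensystem of $\theta_\psi$ reduces to one with eigenvalues $\bar\zeta+\bar\zeta^{-1}$, $\bar\zeta\in\mu_r\subset\overline{\FF}_p$. The Frobenius $x\mapsto x^p$ sends $\bar\zeta+\bar\zeta^{-1}$ to $\bar\zeta^{p}+\bar\zeta^{-p}$, and $\bar\zeta^{p^m}+\bar\zeta^{-p^m}=\bar\zeta+\bar\zeta^{-1}$ exactly when $p^m\equiv\pm1\bmod r$; hence the residue field of this residual eigensystem is exactly $\FF_{p^m}$, where $m$ is the order of $p$ in $(\ZZ/r\ZZ)^\times/\{\pm1\}$. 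To move into the space defining $d_p(N)$, multiply the reduction $\overline{\theta_\psi}$ by the Hasse invariant $A$ (weight $p-1$, $q$-expansion $1$). The product $A\cdot\overline{\theta_\psi}$ is a mod-$p$ cusp form of weight $p$ with the same $q$-expansion; its character is $\varepsilon_N$ for odd $p$ and trivial for $p=2$ (since $\varepsilon_N\equiv1\bmod 2$), matching $d_p(N)$ in both cases. Crucially the congruence is Hecke-compatible away from $p$: for $\ell\neq p$ the operators $T_\ell$ in weights $1$ and $p$ differ only through the factor $\ell^{k-1}$, and $\ell^{p-1}\equiv1\equiv\ell^0\bmod p$, so $A\cdot\overline{\theta_\psi}$ is a mod-$p$ eigenform with residue field $\FF_{p^m}$ inside $S_p(N,\varepsilon_N;\overline{\FF}_p)$ (resp. $S_2(\Gamma_0(N);\overline{\FF}_p)$).

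Finally I would lift back to characteristic zero and read off the dimension. As the weight is $\ge2$, the reduction map from integral cusp forms onto $S_p(N,\varepsilon_N;\overline{\FF}_p)$ is surjective and Hecke-equivariant, so the maximal ideal of the Hecke algebra cut out by $A\cdot\overline{\theta_\psi}$ contains a minimal prime, and the Deligne--Serre lemma produces a characteristic-zero cuspidal eigenform $f$ reducing to our residual eigensystem. Writing $K_f=\QQ(\{a_\ell(f)\})$ for its coefficient field and $\mathfrak P\mid p$ for the prime induced by the chosen embedding, reduction gives $\FF_{p^m}\subseteq\mathcal O_{K_f}/\mathfrak P$, whence
\[ [K_f:\QQ]\ge [\mathcal O_{K_f}/\mathfrak P:\FF_p]\ge m\text{.}\]
The Galois orbit of $f$ is then a simple $\TT$-submodule of dimension $[K_f:\QQ]\ge m$, giving $d_p(N)\ge m$.

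The main obstacle I expect is precisely this lifting step, and it is where the hypothesis $m>2$ enters. One must guarantee that the residual eigensystem carried by $A\cdot\overline{\theta_\psi}$ is genuinely cuspidal and absolutely irreducible, so that it is not congruent to an Eisenstein (or otherwise degenerate) eigensystem that would obstruct producing a cuspidal lift of the claimed size. Here $m>2$ forces $r$ large enough that $\mathrm{Ind}_{G_K}^{G_\QQ}\overline\psi$ is irreducible, while the residue field $\FF_{p^m}$ with $m\ge3$ is too large to coincide with the small residue fields of the degenerate eigensystems one must exclude. The supporting facts---surjectivity of the weight-$(\ge2)$ reduction map and compatibility of Hasse-invariant multiplication with the $T_\ell$ for $\ell\neq p$---are standard, so the real work is the careful bookkeeping of residue fields together with the exclusion of the small cases $m\le2$.
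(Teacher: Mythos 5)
Your construction tracks the paper's own proof almost exactly: the paper likewise takes the weight-one dihedral form attached to an order-$r$ unramified character of $\mathrm{Cl}\,\QQ(\sqrt{-N})$, reduces it modulo a prime above $p$, multiplies by the Hasse invariant, and computes that the residual eigenvalue $\bar\zeta_r+\bar\zeta_r^{-1}$ generates $\FF_{p^m}$; those parts of your argument are correct. The gap is in your lifting step. You assert that ``as the weight is $\ge 2$, the reduction map from integral cusp forms onto $S_p(N,\varepsilon_N;\overline{\FF}_p)$ is surjective'' and call this standard, but the surjectivity you need is onto a fixed \emph{nebentypus eigenspace}, and forming eigenspaces for the diamond operators does not commute with reduction when $p$ divides the order of $(\ZZ/N\ZZ)^\times$ --- which always happens in the headline case $p=2$, since $N$ is odd. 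What is standard (for $\Gamma_1(N)$, weight $\ge 2$) is surjectivity of reduction on the \emph{full} space; applying Deligne--Serre there only produces a characteristic-zero eigenform whose nebentypus is congruent to the target character mod $p$, i.e.\ for $p=2$ a character of $2$-power order, which could be $\varepsilon_N$ itself or a quartic character rather than the trivial character. Such a lift lands in some $S_2(N,\psi;\overline{\QQ})$ with $\psi\ne 1$, its Galois orbit is not a submodule of $S_2(\Gamma_0(N);\QQ)$, and the conclusion $d_2(N)\ge m$ does not follow. (For odd $p$ with $p\nmid\phi(N)$ your argument does go through, since eigenspace decomposition then commutes with reduction; but $p=2$ is exactly the case the theorem is mainly used for.)

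The paper closes this gap by invoking \cite[Proposition 4.2]{Khare}: after localizing at the maximal ideal $\mathfrak{m}$ of the mod-$p$ Hecke algebra attached to the residual eigensystem, the reduction map from $V_p(N;\overline{\ZZ}_p)_{\widetilde{\mathfrak{m}}}\otimes\overline{\FF}_p$ onto $V_p(N;\overline{\FF}_p)_{\mathfrak{m}}$ is surjective \emph{with the prescribed characteristic-zero character} (trivial for $p=2$, $\varepsilon_N$ for odd $p$). That proposition has hypotheses which must be verified: $\overline{\rho}=\mathrm{Ind}_K^\QQ\bar\chi$ is absolutely irreducible (this is where $r>2$ enters), and $\overline{\rho}$ is \emph{not} induced from $\QQ(\sqrt{-1})$ or $\QQ(\sqrt{-3})$ (the paper checks this by a Chebotarev argument using $N\ne 1,3$). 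Your closing discussion misplaces the difficulty: excluding Eisenstein congruences is not the issue, since an induced representation with $r>2$ is already irreducible, and the size of the residue field $\FF_{p^m}$ plays no role in the lifting; the delicate points are the non-induction conditions above and the character-matching problem. So the architecture of your proof is the right one, but the step you dismissed as standard is precisely where a nontrivial localized lifting theorem must be cited and its hypotheses checked.
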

%
%
%
%



We prove Theorem \ref{largesimplesubmoduleviaweight1proof} in \S \ref{largegaloisorbits}. Roughly, the argument proceeds as follows: Let $H_p$ denote the {\it mod $p$ Hasse invariant}, which is an element of $M_{p-1}(\Gamma(1);\FF_p)$. Multiplication by $H_p$ yields an injective map
\[[H_p]:S_1(N,\tilde{\varepsilon}_N;\FF_p)\rightarrow S_p(N,\tilde{\varepsilon}_N;\FF_p):f\mapsto H_pf\]
that is {\it Hecke-compatible} in the sense that $[H_p]\circ T=T\circ [H_p]$ for all $T\in\TT$. Note that when $p=2$, $\tilde{\varepsilon}_N$ {\it is} the trivial character of level $N$, so $[H_2]:S_1(\Gamma_0(N);\FF_2)\rightarrow S_2(\Gamma_0(N);\FF_2)$.

Next, we consider the dihedral newforms in $S_1(N,\varepsilon_N;\overline{\QQ})$. With notation as in the theorem, there is a dihedral newform with $\zeta_r+\zeta_r^{-1}$ as a Hecke eigenvalue, and an explicit description of how the rational prime $p$ factors in the extension $\QQ(\zeta_r+\zeta_r^{-1})/\QQ$ allows us to produce a simple $\TT$-submodule of $S_1(N,\tilde{\varepsilon}_N;\FF_p)$ having dimension at least $m$. By applying the map $[H_p]$ we see that $S_p(N,\tilde{\varepsilon}_N;\FF_p)$ must also have a simple $\TT$-submodule of dimension at least $m$ and lifting back to characteristic zero yields the theorem.

In summary, we obtain the bound of Theorem \ref{largesimplesubmoduleviaweight1proof} by combining the inequality
\[\dim_{\FF_p,\TT}^*S_1(N,\tilde{\varepsilon}_N;\FF_p)\le\dim_{\FF_p,\TT}^*S_p(N,\tilde{\varepsilon}_N;\FF_p)\le d_p(N)\]
with the control over $\dim_{\FF_p,\TT}^*S_1(N,\tilde{\varepsilon}_N;\FF_p)$ furnished by dihedral forms.\footnote{The notation $\dim^*$ is given before Definition \ref{asymptoticsimplicity}.}




\begin{rem}As mentioned in the discussion of \S\ref{Intro}, simple Hecke-submodules of $S_2(N;\QQ)$ of dimension $d$ correspond bijectively with $d$-dimensional simple isogeny factors of the Jacobian $J_0(N)$ of the modular curve $X_0(N)$. So as to avoid repetition later on, we emphasize here:
\begin{quote}
\emph{Any lower bound on $d_2(N)$ is also a lower bound on the dimension of the largest simple isogeny factor of $J_0(N)$.}
\end{quote}
That is, though Theorem \ref{largesimplesubmoduleviaweight1proof} and Corollaries \ref{classnumberbound}--\ref{smorgasbordofconjectures} are stated in the language of Hecke modules, they are also provide information about the structure of $J_0(N)$.

\end{rem}

\subsection{Unconditional bounds on $d_2(N)$.}Next we discuss how one can use Theorem \ref{largesimplesubmoduleviaweight1proof} to obtain unconditional lower bounds on $d_p(N)$. Since these bounds are arguably most interesting when $p=2$, we will concentrate on this case.

Serre showed in \cite{Serre} that $d_2(N)$ is unbounded. His argument, based on the equidistribution of the eigenvalues of $T_\ell$ acting on $S_2(N,\CC)$ for fixed prime $\ell$ and increasing $N$, was made effective by Murty--Sinha \cite{MS} and Royer \cite{Royer}, establishing an asymptotic bound of $d_2(N)\gg\sqrt{\log\log N}$ as $N\rightarrow\infty$.

Theorem \ref{largesimplesubmoduleviaweight1proof} allows us to improve this bound for certain values of $N$:

%

\begin{cor}\label{classnumberbound}
With the notation and assumptions of Theorem \ref{largesimplesubmoduleviaweight1proof}, we have the lower bound $d_2(N)\ge\log_2(r-1)$.
\end{cor}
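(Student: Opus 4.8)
The plan is to apply Theorem \ref{largesimplesubmoduleviaweight1proof} directly in the case $p=2$ and then reduce the corollary to an elementary estimate on the multiplicative order of $2$. Under the notation and assumptions of the theorem (in particular, the hypothesis $m>2$ is among the inherited assumptions, so the relevant inequality is available), we have $d_2(N)\ge m$, where $m$ is the order of $2$ in $(\ZZ/r\ZZ)^\times/\{\pm 1\}$. It therefore suffices to prove the purely arithmetic inequality $m\ge\log_2(r-1)$, and the corollary will follow by transitivity.

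To obtain this estimate, I would simply unwind the definition of $m$. By construction $m$ is the least positive integer with $2^m\equiv\pm 1\pmod r$, so $r$ divides either $2^m-1$ or $2^m+1$. In either case $r\le 2^m+1$, that is, $2^m\ge r-1$, and taking base-$2$ logarithms yields $m\ge\log_2(r-1)$. Combining this with $d_2(N)\ge m$ gives $d_2(N)\ge\log_2(r-1)$, as claimed.

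The only point that requires any care is the passage to the quotient by $\{\pm 1\}$. Because $m$ records the order of $2$ in $(\ZZ/r\ZZ)^\times/\{\pm 1\}$ rather than in $(\ZZ/r\ZZ)^\times$ itself, one can conclude only that $2^m\equiv\pm 1\pmod r$, not that $2^m\equiv 1$; it is precisely the possibility $2^m\equiv -1\pmod r$ that forces the weaker bound with $r-1$ in place of $r+1$. (When $2^m\equiv 1\pmod r$ one in fact obtains the slightly stronger $m\ge\log_2(r+1)$.) There is no genuine obstacle here: all of the substantive work has already been carried out in Theorem \ref{largesimplesubmoduleviaweight1proof}, which produces a simple $\TT$-submodule of dimension $m$; the corollary is then a one-line consequence of the estimate above.
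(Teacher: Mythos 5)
Your proof is correct and follows the same route as the paper: apply Theorem \ref{largesimplesubmoduleviaweight1proof} with $p=2$ to get $d_2(N)\ge m$, then observe that $2^m\equiv\pm 1\pmod r$ forces $2^m\ge r-1$, i.e.\ $m\ge\log_2(r-1)$. The paper dismisses this last estimate as ``easily seen,'' so your careful handling of the $\{\pm 1\}$ quotient (and the remark that the $+1$ case gives the stronger $\log_2(r+1)$) simply fills in the detail the authors omitted.
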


\begin{proof}The order of $2$ in $(\ZZ/r\ZZ)^\times/\{\pm1\}$ is easily seen to be at least $\log_2(r-1)$. The result follows from Theorem \ref{largesimplesubmoduleviaweight1proof}.\end{proof}


\begin{cor} \label{logloglowerboundsmallsplitprime}
If, in addition to the assumptions of Theorem \ref{largesimplesubmoduleviaweight1proof}, we suppose that $N$ is prime and $N\equiv 7\mod 8$, then $d_2(N)\ge\log_2\log_2 \left( \frac{N}{8} \right)$.
\end{cor}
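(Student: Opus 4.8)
The plan is to deduce the corollary from Corollary~\ref{classnumberbound} by exhibiting, for prime $N\equiv 7\pmod 8$, a cyclic quotient of the class group of $\QQ(\sqrt{-N})$ whose order $r$ grows like $\log_2 N$; the congruence $N\equiv 7\pmod 8$ enters precisely because it forces the rational prime $2$ to split. First I would record the standing hypotheses: a prime $N\equiv 7\pmod 8$ satisfies $N\equiv 3\pmod 4$ and is squarefree, and $-N\equiv 1\pmod 8$, so $2$ splits as $(2)=\mathfrak{p}\,\overline{\mathfrak{p}}$ in $\mathcal{O}=\ZZ[\tfrac{1+\sqrt{-N}}{2}]$ with $\mathfrak{p}\neq\overline{\mathfrak{p}}$ and $\mathfrak{p}$ of norm $2$. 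Since the discriminant $-N$ has a single prime factor, genus theory shows $\mathrm{Cl}(\mathcal{O})$ has odd order; in particular the order $r$ of the ideal class $[\mathfrak{p}]$ is odd, so $2\nmid r$ as Theorem~\ref{largesimplesubmoduleviaweight1proof} demands.

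The heart of the argument is a lower bound on $r$ coming from the small norm of $\mathfrak{p}$. Because $\mathfrak{p}^r$ is principal, write $\mathfrak{p}^r=(\alpha)$ with $\alpha=\tfrac{a+b\sqrt{-N}}{2}$, $a\equiv b\pmod 2$, of norm $2^r$; unwinding the norm gives
\[a^2+Nb^2=2^{r+2}.\]
I would then argue $b\neq 0$: if $b=0$ then $\alpha$ is a rational integer, so $(\alpha)$ is stable under conjugation and $\mathfrak{p}^r=\overline{\mathfrak{p}}^{\,r}$, contradicting unique factorization since $\mathfrak{p}$ and $\overline{\mathfrak{p}}$ are distinct coprime primes. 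Hence $b^2\ge 1$ and $2^{r+2}=a^2+Nb^2\ge N$, so that $r\ge \log_2 N-2$ and therefore $r-1\ge\log_2(N/8)$.

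Finally I would feed this $r$ into Corollary~\ref{classnumberbound}. The cyclic subgroup $\langle[\mathfrak{p}]\rangle\subseteq\mathrm{Cl}(\mathcal{O})$ has order $r$, and by Pontryagin self-duality of finite abelian groups this yields a cyclic quotient of order $r$, so Corollary~\ref{classnumberbound} applies and gives $d_2(N)\ge\log_2(r-1)\ge\log_2\log_2(N/8)$ by monotonicity of $\log_2$. I expect the only friction to be two pieces of bookkeeping: the verification $b\neq0$ above, and the hypothesis $m>2$ inherited from Theorem~\ref{largesimplesubmoduleviaweight1proof}. Since $m\le 2$ forces $r\in\{1,3,5\}$, the latter holds automatically once $r\ge 7$, hence for all sufficiently large $N$, and the finitely many smaller primes can be dispatched by direct inspection, where the asserted bound is in any case extremely weak. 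The conceptual point---and the reason the constant $8$ appears---is simply that a prime of norm $2$ cannot become principal before its power attains norm at least $N$, which converts the splitting of $2$ into a logarithmic lower bound on $r$.
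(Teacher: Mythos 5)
Your proposal is correct and takes essentially the same approach as the paper: genus theory gives an odd class group, the congruence $N\equiv 7\bmod 8$ gives a split prime $\mathfrak{p}$ of norm $2$, and the Ellenberg--Venkatesh norm computation on the principal ideal $\mathfrak{p}^r$ yields $r\ge\log_2\bigl(\tfrac{N}{4}\bigr)$, which is then fed into Corollary~\ref{classnumberbound}. The only cosmetic differences are that you rule out $b=0$ by unique factorization of ideals ($\mathfrak{p}^r=\overline{\mathfrak{p}}^{\,r}$ would force $\mathfrak{p}=\overline{\mathfrak{p}}$) where the paper uses the oddness of $r$ (so $2^r$ cannot be a square), and that you explicitly address the inherited hypothesis $m>2$ and the subgroup-to-quotient duality step, both of which the paper leaves implicit.
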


\begin{proof}
Because $N \equiv 7$ mod 8:
\begin{itemize}
\item[a.]The discriminant of $K=\QQ(\sqrt{-N})$ is $-N$ and the ring of integers $\ZZ_K$ is equal to $\ZZ\left[\tfrac{1+\sqrt{-N}}{2}\right]$.
\item[b.]In light of (a.), $N$ is the only prime ramified in $K/\QQ$, so by genus theory, the class group of $K$ has {\it odd order}.
\item[c.]Since $2$ is a quadratic residue modulo $N$, the (unramified) prime $2$ splits in $K/\QQ$, say $2\ZZ_K=\mathfrak{p}\mathfrak{p}'$.
\end{itemize}

Following an argument of Ellenberg--Venkatesh \cite{EV}, $\mathfrak{p}$ generates a large subgroup of the class group: Let $r$ be the order of $\mathfrak{p}$ in $\mathrm{Cl}\,K$, so
\[\mathfrak{p}^r=\bigl(x+y\sqrt{-N}\bigr)\cdot\ZZ_K\]
for some $x,y\in\frac{1}{2}\ZZ$. Taking norms yields $2^r=x^2+Ny^2$. Since $r$ is odd by (b.)\ above, $2^r\ne x^2$, so we must have $y\ne 0$. Finally, because $y^2\ge\frac{1}{4}$ we have $2^r\ge\tfrac{N}{4}$ and therefore  $r\ge\log_2\bigl(\frac{N}{4}\bigr)$. The result now follows from the preceding corollary.\end{proof}

%


\subsection{Conditional bounds on $d_2(N)$.}

Recall the following well-known heuristics and conjectures due to Cohen-Lenstra (CL) \cite{CL}, Soundararajan (S) \cite{Sound}, and Artin (A) \cite{Artin} respectively:

\begin{itemize}\item (CL) For an effective positive proportion of $N \in [X,2X]$, the imaginary quadratic field $\QQ(\sqrt{-N})$ admits a cyclic quotient of odd	 order $r \gg_\epsilon N^{1/2-\epsilon}$.


\item (S) There are positive constants $a$ and $b$, not depending on $r$, such that the number $n_r$ of imaginary quadratic fields with {\it prime} class number $r$ satisfies $ar<n_r\log r<br$
\item (A) The element $2$ generates $(\ZZ/r\ZZ)^\times$ for an effective positive density of primes $r$.\footnote{Artin's primitive root conjecture was proven by Hooley conditional on the Generalized Riemann Hypothesis \cite{Hooley}.}
\end{itemize}

Under various combinations of (CL), (S), and (A), Theorem \ref{largesimplesubmoduleviaweight1proof} implies much stronger lower bounds for $d_2(N)$ than the unconditional lower bound given in Corollary \ref{logloglowerboundsmallsplitprime}.
\begin{cor} \label{smorgasbordofconjectures}
\begin{itemize}\item[a.] Under {\rm (CL)}, 
\[d_2(N) \gg \log N \]\
for a positive proportion of squarefree integers $N \in [X,2X]$.\footnote{Assuming (CL), our main Theorem implies that $d_2(N)$ is at least half the order of 2 in $(\mathbb{Z} / r \mathbb{Z})^\times$ for some integer $r \gg_\epsilon N^{1/2 - \epsilon}$.  This will typically have order of magnitude \emph{much} larger than $\log N$.  The lower bound for $d_2(N)$ from the above item is extremely wasteful but easily stated.}
\item[b.] Under {\rm(S)}, 
\[d_2(N) \gg_\epsilon N^{1/4-\epsilon}\]
for $N$ lying in a subset of the squarefree integers in $[X,2X]$ of size $\gg_\epsilon X^{1-\epsilon}$.
\item[c.] Under {\rm (S)$\,+\,$(A)},
\[d_2(N) \gg_\epsilon N^{1/2-\epsilon}\]
for $N$ lying in a subset of the squarefree integers in $[X,2X]$ of size $\gg_\epsilon X^{1-\epsilon}$.
\end{itemize}
\end{cor}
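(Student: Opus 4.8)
The plan is to feed each of the three heuristics into Theorem \ref{largesimplesubmoduleviaweight1proof} via a single elementary observation and a counting argument, restricting throughout to $N\equiv 3\bmod 4$ squarefree so that $-N$ is the fundamental discriminant of $K=\QQ(\sqrt{-N})$ and the theorem applies with $p=2$. The elementary input is this: if $\mathrm{Cl}\,K$ has a cyclic quotient of odd order $r$ and $m$ is the order of $2$ in $(\ZZ/r\ZZ)^\times/\{\pm 1\}$, then $m\geq \tfrac12\,\mathrm{ord}_r(2)$, since $\{\pm1\}$ has order $2$ and so $\mathrm{ord}_r(2)\in\{m,2m\}$; together with the bound $m\geq\log_2(r-1)$ of Corollary \ref{classnumberbound} this gives two lower bounds on $d_2(N)\geq m$. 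I also record the analytic fact that a prime class number $r=h(-N)$ forces $r\asymp_\epsilon N^{1/2}$: the Dirichlet class number formula $h(-N)=\tfrac{\sqrt N}{\pi}L(1,\varepsilon_N)$ and the Siegel bound $N^{-\epsilon}\ll_\epsilon L(1,\varepsilon_N)\ll\log N$ yield $N^{1/2-\epsilon}\ll_\epsilon r\ll_\epsilon N^{1/2+\epsilon}$.

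For part (a), I would apply (CL) to produce, for a positive proportion of $N\equiv 3\bmod 4$ squarefree in $[X,2X]$, a cyclic quotient of $\mathrm{Cl}\,K$ of odd order $r\gg_\epsilon N^{1/2-\epsilon}$. Since $r$ is odd we have $2\nmid r$, and $m\geq\log_2(r-1)>2$ once $N$ is large, so Theorem \ref{largesimplesubmoduleviaweight1proof} gives $d_2(N)\geq m\geq\log_2(r-1)\gg\log N$. As the footnote notes, the sharper $m\geq\tfrac12\mathrm{ord}_r(2)$ in fact yields $d_2(N)\gg_\epsilon N^{1/2-\epsilon}$ on this set, but only $\log N$ is claimed.

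For parts (b) and (c), I would start from (S), which supplies $\asymp r/\log r$ imaginary quadratic fields of prime class number $r$; for such a field $\mathrm{Cl}\,K\cong\ZZ/r\ZZ$, and $r$ is an odd prime once $N$ is large. Translating this class-number-indexed count into a discriminant-indexed one via $r\asymp_\epsilon N^{1/2}$ and a dyadic summation shows that the number of admissible $N\in[X,2X]$ with $h(-N)$ prime is $\gg X^{1-\epsilon}$. For (b) I then discard the primes $r$ for which $2$ has small order: since $\mathrm{ord}_r(2)\leq T$ forces $r\mid\prod_{e\leq T}(2^e-1)$, the number of such $r$ of size $\asymp\sqrt X$ is $\ll T^2/\log X$, and each removes $\ll\sqrt X/\log X$ values of $N$; taking $T=X^{1/4-\epsilon}$ makes the total loss negligible against $\gg X^{1-\epsilon}$, while forcing $d_2(N)\geq m\geq\tfrac12\mathrm{ord}_r(2)\gg N^{1/4-\epsilon}$ on the survivors. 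For (c) I would instead invoke (A) to restrict $r$ to the positive-density set of primes for which $2$ is a primitive root; then $\mathrm{ord}_r(2)=r-1$, so $m\geq\tfrac12(r-1)\gg N^{1/2-\epsilon}$, and the restriction retains a positive proportion of the fields counted above, hence $\gg X^{1-\epsilon}$ values of $N$.

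The main obstacle I anticipate is precisely this translation from the class-number-indexed counts of (S) to counts of $N$ in a dyadic window, carried out uniformly enough to survive restricting $r$ to an allowed set of primes. Two points need care. First, the discriminants with a fixed prime class number $r$ spread over a range of length $\asymp r^2(\log r)^{O(1)}$ because $L(1,\varepsilon_N)$ fluctuates, so one must verify that the part landing in $[X,2X]$ with $N\equiv 3\bmod 4$ squarefree is still $\gg X^{1-\epsilon}$ and is not anomalously concentrated on the excluded primes---for (b) this is automatic since the small-order primes form a vanishing fraction of all primes near $\sqrt X$, and for (c) it follows from the roughly uniform count $n_r\asymp r/\log r$ combined with the positive density of primitive-root primes. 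Second, the Siegel bound underlying $r\asymp_\epsilon N^{1/2}$ is ineffective, so the implied constants in (b) and (c) are ineffective as well. Granting non-concentration, the counting goes through and the stated bounds follow.
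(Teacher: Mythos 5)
The paper never actually proves this corollary: it is stated as an immediate consequence of Theorem \ref{largesimplesubmoduleviaweight1proof} and the heuristics, with only the footnote to part (a) indicating the mechanism, and your proposal is exactly that intended derivation --- (a) via $d_2(N)\ge m\ge\log_2(r-1)\gg\log N$ once (CL) supplies an odd $r\gg_\epsilon N^{1/2-\epsilon}$; (b) via (S), Siegel's bound $r\asymp_\epsilon N^{1/2}$, and the divisibility argument showing only $O(T^2/\log X)$ primes $r\asymp\sqrt X$ have $\mathrm{ord}_r(2)\le T=X^{1/4-\epsilon}$; (c) via (S) plus Artin primes, everything fed through $d_2(N)\ge m\ge\tfrac12\,\mathrm{ord}_r(2)$. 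The caveat you flag is genuine but is inherent in the corollary as stated rather than a gap you introduce: since (S) counts fields globally by class number, (S) plus (ineffective) Siegel yields $\gg_\epsilon Y^{1-\epsilon}$ qualifying $N\le Y$ for every $Y$, hence the dyadic-window statement only for infinitely many $X$ (by a telescoping argument), and the paper itself nowhere addresses this windowing issue. One simplification available to you in (b) and (c): by genus theory, an imaginary quadratic field of odd class number automatically has prime discriminant $-q$ with $q\equiv 3\bmod 4$, so your standing restriction to $N\equiv 3\bmod 4$ squarefree costs nothing in those counts.
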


To put Corollary \ref{smorgasbordofconjectures} into perspective, note that any lower bound on $d_2(N)$ obtained via Theorem \ref{largesimplesubmoduleviaweight1proof} can never exceed the class number of $\QQ(\sqrt{-N})$.  Since this class number is at least $ \gg_\epsilon N^{1/2 - \epsilon}$ and at most $\ll_\epsilon N^{1/2 + \epsilon}$, the conditional asymptotic bound in (c.)\ above would essentially be the optimal bound accessible to our congruence method.

\section{Consequences of asymptotic simplicity for the structure of $J_0(N)$}

We next describe an application of Conjecture \ref{fixedweighttrivialnebentypusincreasinglevel} to the arithmetic of $J_0(N)$.

Brumer conjectured in \cite{Brumer} that the rank of $J_0(N)(\QQ)$ should be asymptotic to
\[\dim_{\QQ} S_2(\Gamma_0(N);\QQ)^+ \sim \tfrac{1}{2} \dim_{\QQ} S_2(\Gamma_0(N);\QQ)\]as $N\rar\infty$. The $L$-function of $J_0(N)$ factors as a product of $L(f,s)$ as $f$ varies over the weight 2 level $N$ cuspidal eigenforms for $\Gamma_0(N)$.  Brumer's conjecture is ``minimalist" in the sense that if $L(f,s)$ vanished at the central point to the smallest order allowed by the sign of its functional equation and we assume the Birch--Swinnerton-Dyer (BSD) Conjecture, then there would be an \emph{exact formula}\footnote{The functional equation for $L(f,s)$ has sign $-\epsilon$ where $\epsilon$ is the Atkin-Lehner eigenvalue of $f$.  This is the reason $S_2(\Gamma_0(N);\QQ)^+$ appears in formula \eqref{minimalistrank} instead of $S_2(\Gamma_0(N);\QQ)^-$.}
\begin{equation} \label{minimalistrank}
\mathrm{rank}\, J_0(N)(\QQ) = \dim_{\QQ} S_2(\Gamma_0(N);\QQ)^+\pd
\end{equation}

Assuming the data in Table 1 is representative, Conjecture \ref{fixedweighttrivialnebentypusincreasinglevel} should imply \eqref{minimalistrank} for many prime $N$.  Significantly more data concerning the small sporadic simple Hecke-submodules of $S_2(N;\QQ)$ appears in \S \ref{almostsimplicity}.

\begin{prop}[Exact formula for the rank of $J_0(N)$ for some $N$] \label{heckeirreducibilityminimalistconjecture}
Assume BSD for $J_0(N)$ and the Riemann Hypotheses for modular forms of weight 2 and for Dirichlet characters.  Suppose also that the Atkin--Lehner eigenspaces $S_2(\Gamma_0(N);\QQ)^\pm$ are simple $\TT$-modules.  Then
$$\mathrm{rank}\, J_0(N)(\QQ) = \dim S_2(\Gamma_0(N);\QQ)^+\pd$$ 
\end{prop}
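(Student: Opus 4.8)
The plan is to translate the statement into one about orders of vanishing of $L$-functions, use the root-number dichotomy to get the inequality $\ge$, and then let the simplicity hypothesis convert a soft bound on the \emph{average} analytic rank into the exact minimalist identity. First I would invoke BSD for $J_0(N)$ together with the factorization $L(J_0(N),s)=\prod_f L(f,s)$ (the product over the normalized weight-$2$ level-$N$ eigenforms) to write
\[
\mathrm{rank}\,J_0(N)(\QQ)=\mathrm{ord}_{s=1}L(J_0(N),s)=\sum_f\mathrm{ord}_{s=1}L(f,s)\pd
\]
By the sign rule recorded in the footnote to \eqref{minimalistrank}, a form $f\in S_2(\Gamma_0(N);\QQ)^+$ has root number $-1$, so its order of vanishing is odd and in particular $\ge 1$, while a form in $S_2(\Gamma_0(N);\QQ)^-$ has root number $+1$ and even order of vanishing. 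Writing $d^\pm=\dim_\QQ S_2(\Gamma_0(N);\QQ)^\pm$, this already yields $\mathrm{rank}\,J_0(N)(\QQ)\ge d^+$, and the asserted identity is exactly the claim that this inequality is an equality.

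Next I would exploit simplicity. Because $S_2(\Gamma_0(N);\QQ)^+$ and $S_2(\Gamma_0(N);\QQ)^-$ are each a single Galois orbit, they cut out $\QQ$-isogeny-simple abelian varieties $A^\pm$ with $\mathrm{End}^0(A^\pm)$ a field of degree $d^\pm$ acting over $\QQ$; consequently $A^\pm(\QQ)\otimes\QQ$ is a vector space over that field, so its $\sigma$-isotypic pieces all have the same dimension. Under the Galois-equivariant refinement of BSD this forces $\mathrm{ord}_{s=1}L(f,s)$ to take a common value $v^\pm$ for every $f$ in the orbit, whence
\[
\mathrm{rank}\,J_0(N)(\QQ)=d^+v^++d^-v^-\cm
\]
with $v^+$ odd $\ge 1$ and $v^-$ even $\ge 0$. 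The minimalist pair $(v^+,v^-)=(1,0)$ gives precisely $d^+$, whereas every other admissible pair gives at least $\min(3d^+,\,d^++2d^-)=d^++2\min(d^+,d^-)$, which by the class-number estimate of \S\ref{Intro} ($d^\pm\sim\tfrac12\dim_\QQ S_2(\Gamma_0(N);\QQ)$) is $\sim\tfrac32\dim_\QQ S_2(\Gamma_0(N);\QQ)$. It is exactly simplicity that makes any deviation from minimalist behaviour cost a full block of size $\asymp\min(d^+,d^-)$ rather than a single unit, so that a merely \emph{constant} average-rank bound will suffice in place of the exact minimalist average $\tfrac12$.

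It therefore remains to prove that the total analytic rank lies strictly below this threshold, i.e. that the average analytic rank over the family is bounded by a constant $<\tfrac32$. This is the one genuinely analytic ingredient. I would extract it from the explicit formula applied to $\prod_f L(f,s)$ with a suitable nonnegative test function, evaluating the averaged eigenvalues $\sum_f\lambda_f(n)=\mathrm{tr}\!\left(T_n\mid S_2(\Gamma_0(N))\right)$ by the Eichler–Selberg trace formula. The two Riemann Hypotheses enter precisely where they are needed: RH for the weight-$2$ forms puts the zeros on the critical line, so that central vanishing is controlled by the explicit formula, while RH for Dirichlet $L$-functions supplies the power-saving control of the class-number sums arising in the trace formula that is required to enlarge the support of the test function into the Iwaniec–Luo–Sarnak one-level-density regime and thereby beat the constant $\tfrac32$.

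I expect this last step to be the main obstacle. The structural Steps above are essentially formal once simplicity and BSD are granted, and the entire argument hinges on the single inequality that the average analytic rank of the family is bounded below $\tfrac32$; establishing such a bound demands genuine control of the low-lying zeros of the family, uniformly for the (large) level $N$ in question. One caveat worth flagging is that the one-level density is inherently an $N\to\infty$ statement, so the argument as sketched establishes the conclusion for all sufficiently large $N$ satisfying the hypotheses, with the threshold constant being comfortably met in that range.
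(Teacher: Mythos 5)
Your proposal follows essentially the same strategy as the paper's proof: use simplicity to force $\mathrm{rank}\,J_0(N)^\pm$ to be a multiple of $d^\pm=\dim_\QQ S_2(\Gamma_0(N);\QQ)^\pm$, use BSD together with the sign of the functional equation to make the ``$+$'' multiple $v^+$ at least $1$, and then rule out every non-minimalist configuration by an upper bound on the total rank. The two places you deviate deserve comment. First, your ``main obstacle''---an average analytic rank bound for the weight-$2$ level-$N$ family, strictly below a fixed constant---is not something you need to re-derive from the explicit formula and the trace formula: under exactly the RH hypotheses in the statement (plus BSD), this is the Iwaniec--Luo--Sarnak theorem cited in the paper as \eqref{amazingILS}, with an explicit constant $c<1$. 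The paper's proof simply plays the dichotomy off against that citation (if $v^+\ge 2$, or if $v^+=1$ and $v^-\ge 1$, then $\mathrm{rank}\,J_0(N)\ge\dim_\QQ S_2(\Gamma_0(N);\QQ)\cdot(1+o(1))$, contradicting $c<1$), so your structural steps plus that citation already complete the argument; treating it as an open analytic problem is the one real shortfall of the write-up. Second, your parity refinement---$v^+$ odd and $v^-$ even, relaxing the needed threshold from $1$ to $3/2$---does not follow from ``BSD for $J_0(N)$'' alone: plain BSD applied to $J_0(N)^+$ only equates $d^+v^+$ with $\sum_f\mathrm{ord}_{s=1}L(f,s)$, and a sum of $d^+$ odd numbers need not be an odd multiple of $d^+$ (orders $1$ and $3$ across an orbit with $d^+=2$ give $v^+=2$). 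You correctly flag that this needs a Galois-equivariant refinement of BSD, but that is a hypothesis the proposition does not grant; fortunately the refinement is unnecessary, since ILS's $c<1$ already beats the cruder threshold $1$ (and a constant strictly below $3/2$ is in any case no easier to quote than $c<1$). Finally, your closing caveat is accurate and applies equally to the paper: since the bound \eqref{amazingILS} is of the form $c+o(1)$, both arguments really establish the formula for all sufficiently large $N$ satisfying the hypotheses.
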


\begin{rem}
The simplicity of $S_2(\Gamma_0(N);\QQ)^+$ and $S_2(\Gamma_0(N);\QQ)^-$ for large $N$ is only plausible if $N$ is prime.
\end{rem}

\begin{proof}
$J_0(N)$ is isogenous to $J_0(N)^+ \times J_0(N)^-$, where $J_0(N)_{\pm}$ is the abelian subvariety of $J_0(N)$ on which the Atkin-Lehner involution acts as $\pm 1$. By our assumption of simplicity, 
$$\TT(S_2(\Gamma_0(N);\QQ)) = K^+ \times K^-\cm$$ 
where $K_{\pm} = \End_{\QQ}(S_2(\Gamma_0(N);\QQ)_{\pm})$ are fields.  By BSD, $J_0(N)^+(\QQ)$ contains a non-torsion point.  The nonzero $\QQ$-vector space $J_0(N)^+(\QQ) \otimes \QQ$ is in fact a nonzero $K^+$-vector space.  Thus, 
$$\mathrm{rank}\, J_0(N)^+ = \text{nonzero multiple of } \left[ \dim_{\QQ} K^+  = \dim_{\QQ} S_2(\Gamma_0(N);\QQ)^+ \right]\pd$$  
But if we had $\mathrm{rank}\, J_0(N)^+ \geq 2 \dim_{\QQ} S_2(\Gamma_0(N);\QQ)^+$, then we would have
\begin{equation} \label{contradictsILS}
\liminf_{N \to \infty} \frac{\mathrm{rank}\, J_0(N)}{\dim_\QQ S_2(\Gamma_0(N);\QQ)} \geq  \liminf_{N \to \infty} \frac{\mathrm{rank}\, J_0(N)^+}{\dim_\QQ S_2(\Gamma_0(N);\QQ)} \geq 1.
\end{equation}
However, \eqref{contradictsILS} contradicts the result of Iwaniec--Luo--Sarnak \cite{ILS} that 
\begin{equation} \label{amazingILS}
\frac{\mathrm{rank}\, J_0(N)}{\dim_\QQ S_2(\Gamma_0(N);\QQ)} \leq c + o(1)
\end{equation}
for some explicit constant $c < 1$.  Therefore, we must have 
\begin{equation} \label{avoidcontradictionILS}
\mathrm{rank}\, J_0(N)^+ = \dim_\QQ S_2(\Gamma_0(N); \QQ)^+.
\end{equation}

Now, since $J_0(N)^-(\QQ) \otimes \QQ$ is a vector space over $K^-$, its rank must be a multiple of $\dim_\QQ S_2(\Gamma_0(N);\QQ)^-$.  If it is a nonzero multiple, then one would have
\begin{equation} \label{secondcontradictionILS}
 \mathrm{rank}\, J_0(N)^-(\QQ) \geq \dim_\QQ S_2(\Gamma_0(N);\QQ)^-.
\end{equation} 

Combining \eqref{secondcontradictionILS} with what we know from \eqref{avoidcontradictionILS} gives
\begin{align*}
\mathrm{rank}\, J_0(N) &= \mathrm{rank}\, J_0(N)^+ + \mathrm{rank}\, J_0(N)^- \\
&= \dim_{\QQ} S_2(\Gamma_0(N);\QQ)^+ + \mathrm{rank}\, J_0(N)^- \\
&\geq \dim_{\QQ} S_2(\Gamma_0(N);\QQ)^+ + \dim_\QQ S_2(\Gamma_0(N);\QQ)^- \\
&= \dim_{\QQ} S_2(\Gamma_0(N);\QQ),
\end{align*}
again contradicting the result of Iwaniec--Luo--Sarnak.  Therefore, \eqref{secondcontradictionILS} is false and 
\begin{equation} \label{avoidsecondcontradictionILS}
\mathrm{rank}\, J_0(N)^- = 0.
\end{equation}

Combining \eqref{avoidcontradictionILS} and \eqref{avoidsecondcontradictionILS} gives 
$$\mathrm{rank}\, J_0(N) = \mathrm{rank}\, J_0(N)^+ + \mathrm{rank}\, J_0(N)^- = \dim_\QQ S_2(\Gamma_0(N);\QQ)^+\pd$$\end{proof}

\begin{exam}
Referring back to Table 1 from the introduction, if the rate of convergence in the $o(1)$ term from \eqref{amazingILS} were sufficiently fast, Proposition \ref{heckeirreducibilityminimalistconjecture} would imply that $J_0(10007)$ has rank 379.
\end{exam}

\section{Detecting simple rational Hecke submodules via congruences with weight $1$ modular forms} \label{largegaloisorbits}This section is devoted to the proof of our main result.

\begin{proof}[Proof of Theorem \ref{largesimplesubmoduleviaweight1proof}]Let us recall the set-up:
\begin{itemize}\item We have a squarefree positive integer $N\equiv 3\mod 4$ and a prime $p\nmid N$;
\item We assume that the class number of $K=\QQ(\sqrt{-N})$ has a cyclic quotient $C$ of order $r$ with $p\nmid r$;
\item We denote by $m$ the order of $p$ in $(\ZZ/r\ZZ)^\times/\{\pm 1\}$, and we assume that $m>2$.\end{itemize}

To ease notation, set
\[V_p(N; R)=\left\{\begin{array}{ll}S_2(\Gamma_0(N);R)&\text{if $p=2$, and}\\
S_p(N,\varepsilon_N;R)&\text{if $p$ is odd.}\end{array}\right.\]
Our goal is to prove that $V_p(N;\QQ)$ contains a simple Hecke submodule of dimension at least $m$.

Since the characteristic polynomial of $T_n$ acting on $V_p(N;\QQ)$ and the characteristic polynomial of $T_n$ acting on $V_p(N;\ZZ_p)$ are identical and monic with coefficients in $\ZZ$, it is enough to show that there is a prime $\ell_0$ such that characteristic polynomial of $T_{\ell_0}$ acting on $V_p(N;\ZZ_p)$ has an irreducible factor of degree $\ge m$.


%
%
%
%
%
Let $\zeta_r$ be a fixed primitive $r$th root of unity in $\overline{\QQ}$, and let $\chi$ be the composition
$$\chi : \mathrm{Cl}\,K \twoheadrightarrow C \xrightarrow{g \mapsto \zeta_r} \overline{\QQ}^\times\cm$$
where $g$ is a generator of $C$. By class field theory, $\chi$ is associated with an everywhere-unramified character of $\mathrm{Gal}(\overline{\QQ}/K)$ that we also denote by $\chi$.  

Let $I_{\chi} := \mathrm{Ind}_K^\QQ \chi$ be the $2$-dimensional representation of $\mathrm{Gal}(\overline{\QQ}/\QQ)$ induced from $\chi$.  The Galois representation $I_{\chi}$ is associated with a weight 1 cusp form of level $N$ and nebentypus $\varepsilon_N$.  More precisely, there is a normalized cuspidal eigenform $f_{\chi} \in S_1(N,\varepsilon_N; \overline{\ZZ})$ with $q$-expansion $\sum_{n\ge 1}a_nq^n$ such that $T_nf_\chi=a_nf_\chi$ for all $n$ relatively prime to $N$ and such that the characteristic polynomial of $I_{\chi}(\frob_\ell)$ is $X^2 - a_\ell X + \varepsilon_N(\ell)$ for all primes $\ell\nmid N$. Calculating the trace of $I_\chi$ directly, we find that $a_\ell=\chi(\frob_\ell)+\chi^{-1}(\frob_\ell)$ whenever $\ell\nmid N$. By Chebotarev's density theorem, we may choose a prime $\ell_0\nmid Np$ such that $a_{\ell_0}=\zeta_r+\zeta_r^{-1}$.

Fix an embedding $\iota: \overline{\QQ} \rightarrow \overline{\QQ}_p$. The image of $f_\chi$ under the induced embedding $\iota:S_1(N,\varepsilon_N;\overline{\ZZ})\rar S_1(N,\varepsilon_N;\overline{\ZZ}_p)$, denoted $f_\chi^\iota$, is an eigenform in $S_1(N,\varepsilon_N;\overline{\ZZ}_p)$ satisfying $T_{n}(f_\chi^\iota)=\iota(a_n)f_\chi^\iota$ whenever $(n,Np)=1$.

Next we consider two maps that commute with the Hecke operators $T_n$ for all $(n,Np)=1$:
\begin{itemize}\item 
The reduction map
$$S_1(N,\varepsilon_N; \overline{\ZZ}_p) \otimes \overline{\FF}_p \xrightarrow{\mathrm{red}} S_1(N, \varepsilon_N; \overline{\FF}_p)$$
is injective, $\overline{\FF}_p$-linear, and Hecke-compatible in the sense that
\[\label{compatibilityreduction}
\mathrm{red}( T_n(h \otimes 1) ) = T_n(\mathrm{red}(h \otimes 1)) \text{ for all } h \in S_1(N,\varepsilon_N; \overline{\ZZ}_p)\]
\item Multiplication by the Hasse invariant $H_p\in M_{p-1}(\Gamma(1);\FF_p)$ yields an injective, Hecke-compatible map
\[[H_p]: S_1(N, \varepsilon_N; \overline{\mathbb{F}}_p) \rightarrow V_p(N; \overline{\FF}_p) :f \mapsto H_p f \]
\end{itemize}

%
%
%
%


It follows that $f = [H_p](\mathrm{red}(f_{\chi}^\iota\otimes 1)) \in V_p(N;\overline{\FF}_p)$ is nonzero and satisfies $T_n(f) = \overline{\iota (a_n)} f$ for all $n$ with $(n,Np) = 1$, the overline indicating reduction $\overline{\ZZ}_p\rar\overline{\FF}_p$. In particular,
\begin{equation}\label{lambda}T_{\ell_0}f=\lambda f\cm\quad\text{where $\lambda=\overline{\iota(\zeta_r)+\iota(\zeta_r^{-1})}$.}\end{equation}

Let $\mathfrak{m}$ denote the maximal ideal of the Hecke algebra $\mathbb{T}_{p,\overline{\FF}_p}$ associated with the eigenvalue system $\{ \overline{\iota(a_n)} \}_{n,(n,Np)=1}$.  The Galois representation $\overline{\rho}$ associated with $\mathfrak{m}$ equals $\overline{\iota I_{\chi}}$, i.e., the composition of $I_{\chi}$ with the ring homomorphism $\overline{\iota}:\overline{\ZZ}\rar\overline{\FF}_p$.  
\begin{itemize}
\item
Because $r > 2, \overline{\rho}$ is absolutely irreducible; indeed, its image is a nonabelian dihedral group acting in ``the usual way" on a 2-dimension $\overline{\FF}_p$-vector space.  
\item
$\overline{\rho}$ is not induced from $K_0 = \QQ(\sqrt{-1})$ or $\QQ(\sqrt{-3})$: 
%
To verify this, let $\overline{\tau} = \mathrm{Ind}_{K_0}^\QQ(\chi_0)$ for some $1$-dimensional representation $\chi_0: G_{K_0} \rightarrow \overline{\FF}_p^\times$.  The fields $K_0$ and $\QQ(\sqrt{-N})$ are linearly disjoint because $N \neq 1,3$.  By Chebotarev, we can find a rational prime $v$ which is unramified for $\overline{\rho}$, $K_0$, and $\QQ(\sqrt{-N})$, and for which (a.)\ $\frob_v$ is the nontrivial element of $\mathrm{Gal}(K_0 / \QQ)$ and (b.)\ $\frob_v$ is the trivial element of $\mathrm{Gal}(\QQ(\sqrt{-N}) / \QQ)$.  By (b.), $\overline{\rho}(\frob_v)$ is a scalar matrix.  On the other hand, $\overline{\tau}(\frob_v)$ is a non-scalar matrix by (a.).  Therefore, $\overline{\rho}$ cannot be isomorphic to $\overline{\tau}$.
\end{itemize}
By \cite[Proposition 4.2]{Khare}, $\mathfrak{m}$ is the image of a maximal ideal $\widetilde{\mathfrak{m}}$ for $\TT(V_p(N;\overline{\ZZ}_p))$ and the reduction map
$$V_p(N; \overline{\ZZ}_p)_{\widetilde{\mathfrak{m}}} \otimes \overline{\FF}_p \twoheadrightarrow V_p(N ; \overline{\FF}_p)_\mathfrak{m}$$ is surjective and Hecke-compatible. Therefore, for all $\ell\nmid Np$, the element $\overline{\iota(a_\ell)}\in\overline{\FF}_p$ is a root of the characteristic polynomial of $T_\ell$ acting on $V_p(N; \ZZ_p) \otimes \FF_p$, which is identical to the characteristic polynomial of $T_\ell$ acting on $V_p(N; \overline{\ZZ}_p) \otimes\overline{\FF}_p$.

Finally, the element $\lambda\in\overline{\FF}_p$ defined earlier in \eqref{lambda} generates an extension of $\FF_p$ having degree equal to the order of $\frob_p\in\mathrm{Gal}(\QQ(\zeta_r+\zeta_r^{-1})/\QQ)$ (note that $p$ is unramfied since $p\nmid r$). We have $\frob_p=p$ under the identification
\[\mathrm{Gal}(\QQ(\zeta_r+\zeta_r^{-1})/\QQ)=(\ZZ/r\ZZ)^\times/\{\pm1\}\cm\] so we may deduce that $[\FF_p(\lambda):\FF_p]=m$, where $m$ is the order of $p$ in $(\ZZ/r\ZZ)^\times/\{\pm1\}$.

It follows that the characteristic polynomial of $T_{\ell_0}$ acting on $V_p(N;\ZZ_p)\otimes\FF_p$ has an irreducible factor of degree $m$. Since the characteristic polynomial of $T_{\ell_0}$ acting on $V_p(N;\ZZ_p)\otimes\FF_p$ is just the mod-$p$ reduction of the characteristic polynomial of $T_{\ell_0}$ acting on $V_p(N;\ZZ_p)$, this latter polynomial must have an irreducible factor of degree $\ge m$, as desired.\end{proof}

\begin{exam}Let $N=719$ (this is prime). The field $\QQ(\sqrt{-719})$ has class number $31$, and of course, $2$ has order $5$ modulo $31$.

The space $S_1(719,\varepsilon;\QQ)$ is $15$-dimensional and Hecke-simple. The characteristic polynomial of $T_2$ acting on this space is irreducible over $\QQ$, and it factors as
\begin{equation}\label{example719eq1}(X^5 + X^2 + 1) (X^5 + X^3 + 1) (X^5 + X^4 + X^3 + X^2 + 1)\end{equation}
over $\FF_2$. Our method now yields the inequality
\[\dim_{\FF_2,\TT}^*S_2(\Gamma_0(719);\FF_2)\ge\dim_{\FF_2,\TT}^*S_1(N,\tilde{\varepsilon};\FF_2)\ge 5\cm\]
and, by lifting to characteristic zero, we obtain $d_2(719)=\dim^*S_2(\Gamma_0(719);\QQ)\ge 5$.

The characteristic polynomial of $T_2$ acting on $S_2(\Gamma_0(719);\QQ)$ factors as
\begin{multline}\label{example719eq2}(X^5 + 2X^4 - 4X^3 - 9X^2 - 2X + 1)\\(X^{10} - 11X^8 + 2X^7 + 39X^6 - 12X^5 - 52X^4 + 16X^3 + 24X^2 - 5X - 1)\cdot P\cm\end{multline}
where $\deg P=45$.

Note that the first factors in \eqref{example719eq1} and \eqref{example719eq2} are congruent modulo $2$. That is, in this particular case, the method of Theorem \ref{largesimplesubmoduleviaweight1proof} detects a simple rational Hecke submodule of dimension exactly equal to the lower bound $m$.

The other two factors in \eqref{example719eq1} divide the reduction of $P$ modulo $2$. A dimension argument shows that $\ker P(T_2)=S_2(\Gamma_0(719);\QQ)^+$. Thus, of the three factors in \eqref{example719eq1}, the first comes from $S_2(\Gamma_0(719);\QQ)^-$ while the second and third come from $S_2(\Gamma_0(719);\QQ)^+$.

\end{exam}

\begin{exam}One advantage of our method is that it allows us to derive concrete lower bounds on $d_2(N)$ without having to compute $S_2(\Gamma_0(N);\QQ)$ directly. Below, we give the bounds obtained by Theorem \ref{largesimplesubmoduleviaweight1proof} for some relatively high levels:

\begin{center}\begin{tabular}{|r|c|r|r|}\hline $N$ & $\mathrm{Cl}\,\QQ(\sqrt{-N})$ & $r$ & $m$\\\hline\hline
$81799$ & $C_{127}$ & $127$ & $7$\\\hline
$81839$ & $C_{377}$ & $377$ & $42$\\\hline
$81847$ & $C_{183}$ & $183$ & $60$\\\hline
$81883$ & $C_{35}$ & $35$ & $12$\\\hline
$81899$ & $C_{101}$ & $101$ & $50$\\\hline
\end{tabular}\smallskip

{\sc Table 2.} Bounds on $d_2(N)$ for large $N$.

\end{center}

Above, $C_j$ is a cyclic group of order $j$, $r$ is the order of the largest cyclic quotient (all class groups were cyclic in this range), and $m$ is the order of $2$ in $(\ZZ/r\ZZ)^\times/\{\pm1\}$. By Theorem \ref{largesimplesubmoduleviaweight1proof}, $d_2(N)\ge m$. This allows us to conclude, for example, that the $6820$-dimensional space $S_2(\Gamma_0(81839);\QQ)$ admits a simple Hecke submodule of dimension $\ge 42$.

\begin{rem} \label{Mersenneprimes}
Suppose $v = 2^p - 1$ is a Mersenne prime.  
Assuming Soundararajan's Conjecture \cite[(C1)]{Sound}, there will exist some $N$ for which $\QQ(\sqrt{-N})$ has class number $v$. This hypothetical integer $N$ would have size of magnitude roughly $2^{2p}$, up to factors that are polynomial in $p$.  On the other hand, the order of 2 in $(\ZZ/v\ZZ)^\times/ \{\pm 1\}$ equals $p$, which has size roughly $\frac{1}{2} \log N$.

In the range of the example above, the unconditional bound of Corollary \ref{logloglowerboundsmallsplitprime} is $d_2(N)\ge \lceil \log_2 \log_2 \left(\frac{N}{8} \right) \rceil=4$, and this bound applies when $N\equiv 7\mod 8$, as in the first three rows of Table 2. The $m$ of the first row is so small because the class number happens to be a Mersenne prime.
\end{rem}
\end{exam}

\section{Almost-simplicity of $S_2(\Gamma_0(N);\QQ)$}

\subsection{Small rational Hecke modules for $\Gamma_0(N)$} \label{almostsimplicity}

If $R$ is a ring and $f\in R[X]$, let $\deg^*_R(f)$ denote the largest degree of an irreducible factor of $f$ (in $R[X]$).

Let $\Phi_{N,\ell}^\pm(X)$ denote the characteristic polynomial of $T_\ell$ acting on $S_2(\Gamma_0(N);\QQ)^\pm$ and set
\begin{align*}\delta_{N,\ell}^\pm&=\deg_{\QQ}\Phi_{N,\ell}^\pm-\deg_{\QQ}^*\Phi_{N,\ell}^\pm\\&=\dim_{\QQ} S_2(\Gamma_0(N);\QQ)^\pm-\deg_{\QQ}^*\Phi_{N,\ell}^\pm\pd\end{align*}
$\delta_{N,\ell}^\pm$ captures the contribution to $\dim_{\QQ} S_2(\Gamma_0(N);\QQ)^\pm$ by {\it small} Hecke submodules. Conjecture \ref{fixedweighttrivialnebentypusincreasinglevel} implies that for any $\ell$ we have
\[\lim_{\substack{N\rar\infty\\\text{$N$ prime}}}\left(\frac{\delta_{N,\ell}^\pm}{\dim_{\QQ} S_2(\Gamma_0(N);\QQ)^\pm}\right)=0\pd\]
Empirically, much more appears true: $\delta_{N,\ell}^{\pm}$ seems to behave like a {\it bounded quantity}.  We computed $\delta_{N,2}^{\pm}$ for the $1092$ primes $N \in [100,9000]$:  

\begin{center}{\scriptsize\begin{tabular}{|r|r|r|r|r|r|r|r|r|r|r|r|r|r|r|r|r|r|r|r|}\hline $\delta$ & $0$ & $1$ & $2$ & $3$ & $4$ & $5$ & $6$ &$7$ & $8$ & $9$ & $10$ & $11$ & $12$ & $13$ & $14$ & $15$ &$16$ & $17$ & $18$\\\hline
freq.\ $\delta_{N,2}^+=\delta$ & 860 & 87 & 85 & 19 & 14 & 11 & 4 & 5 & 2& 3& 0 & 0 & 1 & 0 & 0 & 1 & 0 & 0 & 0 \\\hline
freq.\ $\delta_{N,2}^-=\delta$ & 865 & 103 & 68 & 29 & 15 & 3& 3& 1& 2& 1& 1& 0&0&0&0&0&0&0&1\\\hline

\end{tabular}}\smallskip

{\sc Table 3.} Histogram for values of $\delta_{N,2}^\pm$.\end{center}For example, $\delta_{N,2}^+=3$ for exactly $19$ primes $N\in[100,9000]$.

%
%
%
%
%
%
%
%
%

\subsection{Remarks on the size of $\delta_{N,2}^\pm$}
We will now try to give a plausible explanation for why $\deg \delta_{N,2}^\pm$ is usually so small.

For every Hecke eigenform $f \in S_2(N;\overline{\QQ})$, let $I_f$ denote the kernel of the homomorphism $\mathbb{T} \rightarrow \CC$ induced by the action of the prime to $N$ Hecke operators on $f$.  Let $\mathcal{O}$ denote the $\mathrm{Gal}(\overline{\QQ} / \QQ)$-orbit of $f$.  Note that $I_f$ depends only on $\mathcal{O}$.  The Hecke algebra $\mathbb{T}$ acts, by Picard functoriality, on the Jacobian $J_0(N)$ of the modular curve $X_0(N)$.  Let $A_{\mathcal{O}}:= J_0(N) / I_f$, the so-called \emph{optimal quotient} associated with $f$.  
   
The abelian variety $J_0(N)$ is isogenous to $\prod_{\mathcal{O}} A_{\mathcal{O}}$, where $\mathcal{O}$ ranges over all the $\mathrm{Gal}(\overline{\QQ} / \QQ)$-orbits of normalized Hecke eigenforms in $S_2(N;\overline{\QQ})$.  The dimension of $A_{\mathcal{O}}$ equals $g_{\mathcal{O}} = \# \mathcal{O}$. Furthermore, $A_{\mathcal{O}}$ has $\GL_2$-type for the field $K_{\mathcal{O}}$ generated by the Hecke eigenvalues of one orbit representative, i.e. $\deg \left( K_{\mathcal{O}} / \QQ \right) = g_{\mathcal{O}}$ and the endomorphism algebra of $A_{\mathcal{O}}$ equals $\mathrm{Mat}_2(K_{\mathcal{O}})$.  Because all Hecke eigenforms are new at prime level $N$, the abelian variety $A_{\mathcal{O}}$ has conductor $N^d$.  Furthermore, $A_{\mathcal{O}}$ admits a canonical polarization whose degree $d_{\mathcal{O}}$ is studied in \cite{Emerton}.   

Conversely, let $A / \QQ$ be an abelian variety of dimension $d$ which is $\GL_2$-type with respect to a totally real number field $F / \QQ$ of degree $d$.  Suppose that $A$ has conductor $N^d$.  Thanks to Khare--Winterberger's proof of Serre's Conjecture and the current state of knowledge for modularity lifting theorems, every such $A$ is isogenous to $A_{\mathcal{O}}$ for some $\mathrm{Gal}(\overline{\QQ} / \QQ)$-orbit of Hecke eigenforms in $S_2(N;\overline{\QQ})$.     

Every abelian variety $A_{\mathcal{O}}$ defines a rational point in $\mathcal{A}_{g_{\mathcal{O}}, d_{\mathcal{O}}}(\QQ)$, where $\mathcal{A}_{g,d}$ denotes the moduli space of $g$-dimensional abelian varieties equipped with a polarization of degree $d$.  These varieties tend to be general type for $g$ and $d$ large.  For example, it is known that $\mathcal{A}_{g,1}$ is general type for $g > 6$ and is not for $g < 6$.  

The Bombieri--Lang Conjectures suggest that $V(\QQ)$ is ``very thin" for $V / \QQ$ a variety of general type.  As explained in the previous paragraph, every (non-repeated) irreducible factor of $\delta_{N,2}^\pm$ of degree $g$ gives rise to an element of $\mathcal{A}_{g,d}(\QQ)$ for some (controlled) $d$.  This perhaps explains why the counts in Table 3 apparently ``thin out'' around $\delta_{N,2}^\pm = 6$.

\section{Detecting simple rational Hecke modules for $\Gamma(1)$ via congruences} 

Though we have so far only been able to derive explicit bounds in the cases covered by Theorem \ref{largesimplesubmoduleviaweight1proof}, congruences between spaces of modular forms can potentially be used to study other families of Hecke modules. Interestingly, it may be possible to apply congruence methods toward the conjectural simplicity of spaces of level $1$ cusp forms.

There is a well-known Hecke-compatible congruence between $S_{p+1}(\Gamma(1), \FF_p)=S_{p+1}(\SL_2(\ZZ);\FF_p)$ and $S_2(\Gamma_0(p);\FF_p)$ \cite[Th\'{e}or\`{e}me 11.b]{Serrecong}. By an argument similar to the one made in the proof of our main theorem, this congruence implies
\begin{equation}\label{alpha}\dim_{\QQ,\TT}^* S_{p+1}(\Gamma(1);\QQ)\ge\dim_{\FF_p,\TT}^* S_{2}(\Gamma_0(p);\FF_p)\pd\end{equation}
Let $D_p$ be the common dimension of $S_{p+1}(\Gamma(1))$ and $S_2(\Gamma_0(p))$ over both $\QQ$ and $\FF_p$, and let
\[\alpha_p=\frac{1}{D_p}\dim_{\FF_p,\TT}^*S_2(\Gamma_0(p);\FF_p)\pd\]
The number $\alpha_p\in[0,1]$ measures ``how close'' the inequality \eqref{alpha} comes to proving the Hecke-simplicity of $S_{p+1}(\Gamma(1);\QQ)$ implied by Maeda's conjecture.

Note that since $\dim_{\FF_p,\TT}^*S_2(\Gamma_0(p);\FF_p)\le\dim_{\QQ,\TT}^*S_2(\Gamma_0(p);\QQ)$, the existence of the Atkin--Lehner involution on $S_2(\Gamma_0(p);\QQ)$ implies that $\limsup_{p\rar\infty}(\alpha_p)\le\tfrac{1}{2}$. Nonetheless, if one could show that $\{\alpha_p\}_p$ is bounded below by some threshold with high probability, this would constitute considerable progress towards Maeda's conjecture. If one is willing to speculate that the decompositions of $S_2(\Gamma_0(p);\FF_p)^\pm$ into simple $T_\ell$-modules resemble the cycle decompositions of random permutations, this approach does not seem totally unrealistic.

\begin{conj}[Imprecise form]\label{randompermutationmodelimprecise}Let $D_p^\pm=\dim_{\QQ}S_2(\Gamma_0(p);\QQ)^\pm$ and fix a prime $\ell$.

The factorization type of the characteristic polynomial of $T_\ell$ acting on $S_2(\Gamma_0(p);\FF_p)$ is distributed like the cycle type of $\sigma_+\sqcup\sigma_-$ where $\sigma_+$ and $\sigma_-$ are independently chosen random permutations on $D_p^+$ and $D_p^-$ elements, respectively.\footnote{More precisely, we believe that $\mathbb{T}_{\pm, \mathrm{red}}$, the reduced quotient of the image of $\mathbb{T}$ in $\End (S_2(\Gamma_0(p);\FF_p)_{\pm})$, behave like two independent \'{e}tale $\FF_p$-algebras distributed according to the random permutation model from \cite{Lip}.  The ``longest cycle of $T_2$" statistic studied in this subsection is very unlikely to differ substantially from the ``longest cycle of $\mathbb{T}_{\pm,\mathrm{red}}$.'' Any discrepancy between these two statistics would correspond to the projection of $T_2$ onto the largest simple factor of $\mathbb{T}_{\pm,\mathrm{red}}$ (an extension of $\FF_p$ of large degree) landing in a proper subfield.

}\end{conj}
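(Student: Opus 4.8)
The statement is distributional, so the first task is to fix the two probability spaces and to replace the ``imprecise'' comparison by the precise one sketched in the footnote. On the arithmetic side the random parameter is the prime $p$, and for each $p$ we record the factorization type of $\Phi_{p,\ell}(X)=\det(X-T_\ell\mid S_2(\Gamma_0(p);\FF_p))$, i.e.\ the multiset of degrees of its irreducible factors over $\FF_p$. On the model side, following \cite{Lip}, one draws $\sigma_+$ and $\sigma_-$ uniformly and independently from the symmetric groups on $D_p^+$ and $D_p^-$ letters and compares the cycle type of $\sigma_+\sqcup\sigma_-$. The plan is to recast ``factorization type of $\Phi_{p,\ell}$'' as the intrinsic invariant of the footnote --- the isomorphism types of the reduced \'etale $\FF_p$-algebras $\TT_{\pm,\mathrm{red}}$ acting on $S_2(\Gamma_0(p);\FF_p)_\pm$ --- conjecture that these are independent and \cite{Lip}-distributed, and then deduce the $T_\ell$-statement as a corollary.

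The natural computational handle is moments. For each fixed $d$ I would compute the expected number $\mathbb{E}[c_d]$ of degree-$d$ irreducible factors and compare it with the random-permutation prediction that the number of $d$-cycles of $\sigma_+\sqcup\sigma_-$ converges in distribution to a Poisson with mean $1/d$. Since the number of degree-$d$ factors is $1/d$ times the number of $T_\ell$-eigenvalues $\lambda\in\overline{\FF}_p$ (with multiplicity) whose field of definition $\FF_p(\lambda)$ has degree exactly $d$, one needs the joint distribution of the eigenvalues together with their field-of-definition degrees. Through the Eichler--Selberg trace formula, or equivalently through the description $a_\ell(f)=\mathrm{tr}\,\overline{\rho}_f(\frob_\ell)$ of mod-$p$ eigenvalues, the power sums $\mathrm{tr}(T_\ell^k)$ are explicit, and matching these (and the higher factorial moments governing the joint law of $(c_1,c_2,\dots)$) against the corresponding moments for a sum of independent ``random eigenvalue'' samples is the technical core.

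Genuine arithmetic enters in the splitting into $\pm$ parts and the asserted independence of $\TT_{+,\mathrm{red}}$ and $\TT_{-,\mathrm{red}}$. I would isolate $S_2(\Gamma_0(p);\FF_p)^\pm$ using the Atkin--Lehner involution $w_p$, which commutes with $\TT$, and argue that the eigenvalue systems occurring in the two eigenspaces decouple --- heuristically because the sign of the functional equation imposes no constraint on the $\ell$-Frobenius traces, whereas the \emph{dimensions} $D_p^\pm$ are governed by class numbers as in \S\ref{Intro}. Establishing even this decoupling rigorously is delicate.

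The hard part --- indeed the reason this remains a conjecture --- is that any honest proof demands simultaneous control of mod-$p$ Hecke eigenvalue multiplicities and field-of-definition degrees at a resolution far beyond what vertical equidistribution (Eichler--Selberg, vertical Sato--Tate, the large sieve) currently delivers. The realistic programme is therefore: (i) pin down the precise form via \cite{Lip}; (ii) verify unconditionally that the first few moments match, using the trace formula; and (iii) corroborate the full distribution numerically, as in Table 3. I do not expect a complete proof to be accessible; the value of the conjecture lies in its predictive force, and the achievable rigorous content is the matching of low moments together with the observation that the ``longest-cycle'' statistic it predicts is precisely the quantity $\alpha_p$ relevant to Maeda's conjecture.
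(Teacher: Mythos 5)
This statement is a conjecture, and the paper does not prove it --- nor could it: the paper's entire treatment consists of the heuristic in the footnote (modelling $\TT_{\pm,\mathrm{red}}$ as independent \'etale $\FF_p$-algebras following the random permutation model of \cite{Lip}), a numerical test for $\ell=2$ over the primes $p\in[100,10200]$, and the resulting precise quantification (Conjecture \ref{preciseform}) phrased in terms of the \emph{longest-factor} statistic $Y_\ell(p)$ and the Dickman function, whose CDF is matched against $\rho\bigl(\frac{1}{2y}\bigr)^2$ in Figure \ref{dickmanrho}. You correctly recognize that no proof is accessible and instead propose a programme; that is the right posture. Your route to a precise form, however, differs from the paper's: you precisify via factor-count statistics (number $c_d$ of degree-$d$ irreducible factors, Poisson$(1/d)$ limits, factorial moments), whereas the paper precisifies via the extreme-value statistic $Y_\ell$ and Dickman's $\rho$, which is the statistic actually relevant to the application (large simple Hecke modules and $\alpha_p$). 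Both formalizations are natural, and the moment formulation is arguably finer; your observation that independence of the $\pm$ parts is itself a substantive arithmetic assertion matches the paper's footnote.

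The one genuine overreach is your claim that matching low moments is ``achievable rigorous content'' via the Eichler--Selberg trace formula. The trace formula controls $\tr(T_\ell^k)$ as integers, hence archimedean (vertical Sato--Tate) equidistribution of eigenvalues; but the conjecture concerns mod-$p$ data, namely the residue degrees $[\FF_p(\bar\lambda):\FF_p]$ of reductions of Hecke eigenvalues at primes above $p$ in the Hecke fields. Even the \emph{first} moment --- the average number of $\FF_p$-rational eigenvalues of $T_\ell$ on $S_2(\Gamma_0(p);\FF_p)$ --- amounts to controlling splitting behavior of $p$ in (subfields of) the Hecke fields $K_{\mathcal{O}}$, about whose Galois-theoretic structure essentially nothing unconditional is known. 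Knowing the integers $\tr(T_\ell^k)$ exactly does not help: their reductions mod $p$ determine the characteristic polynomial mod $p$, but extracting average factorization statistics from them over varying $p$ is precisely the original problem in disguise. So step (ii) of your programme should be demoted from ``unconditionally verifiable'' to ``conjectural on the same footing as the rest''; what is genuinely achievable is only your step (iii), the numerical corroboration, which is what the paper carries out.
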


%
%
%
%

%
%
%




We have tested Conjecture \ref{randompermutationmodelimprecise} numerically for $\ell=2$.  To put the results in context, we first recall a special function which describes the distribution of the longest cycle length in large random permutations.   

\begin{defn}
The Dickman function $\rho$, defined by $\rho(u)=1$ on $u\in(0,1]$ and by
\[\rho(u)=\frac{1}{u}\int_{u-1}^u\rho(t)\,dt\]
for $u>1$, captures the statistics of long cycles in random large permutations \cite{Granville}: the probability that a randomly chosen permutation on $N$ elements has longest cycle of length $\leq \frac{N}{u}$ approaches $\rho(u)$ as $N \to \infty$.
\end{defn}

We define random variables $Y_\ell,Y_\ell^+,Y_\ell^-$ on the set of primes $\ne\ell$ as follows:
\[Y_{\ell}(p)=\frac{1}{D_p}\deg_{\FF_p}^*\widetilde{\Phi}_{p,\ell}(X)\quad\text{and}\quad Y_{\ell}^\pm(p)=\frac{1}{D_p^\pm}\deg_{\FF_p}^*\widetilde{\Phi}_{p,\ell}^\pm(X)\cm\]
where $\widetilde{\Phi}_{p,\ell}$ is the characteristic polynomial of $T_\ell$ on $S_2(\Gamma_0(p);\FF_p)$ and $\widetilde{\Phi}_{p,\ell}^\pm$ is the characteristic polynomial of $T_\ell$ on $S_2(\Gamma_0(p);\FF_p)^\pm$. Note that for any $p$
\[Y_{\ell}(p)=\max\{Y_{\ell}^+(p),Y_{\ell}^-(p)\}\pd\]

Assuming Conjecture \ref{randompermutationmodelimprecise}, we are led to guess that $Y_{\ell}^\pm\le y$ with probability approaching $\rho\bigl(\frac{1}{2y}\bigr)$.   If $Y_{\ell}^+$ and $Y_{\ell}^-$ are independent, we would have $Y_\ell\le y$ with probability approaching $\rho \bigl(\frac{1}{2y} \bigr)^2$.

Using SAGE, we computed $Y_2(p)$ for all primes $p \in [100,10200]$. The results of this computation are displayed in Figure \ref{dickmanrho}: We have overlaid the CDF of the random variable $p\mapsto Y_2(p)$ (in light red dots), with the function $y\mapsto\rho\bigl(\frac{1}{2y}\bigr)^2$ (in blue). The agreement is striking. We conclude with the following conjecture:

\setcounter{figure}{3}

\begin{figure}
  \includegraphics[width=\linewidth]{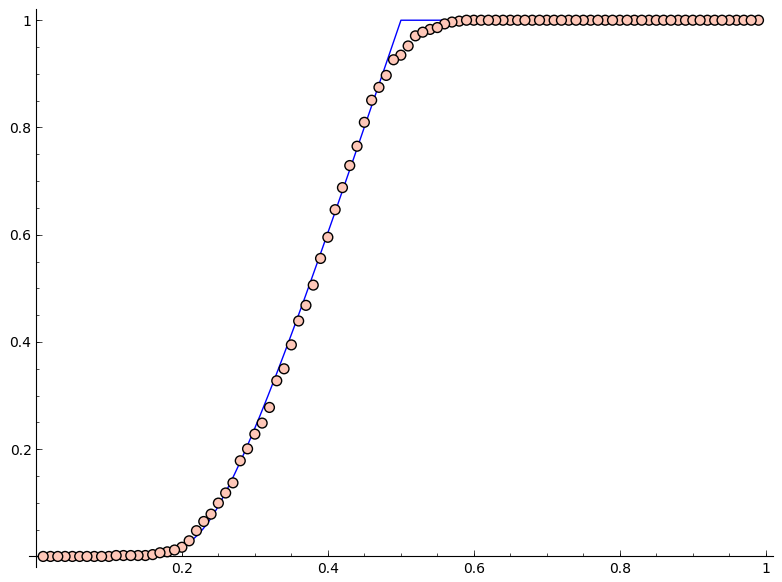}
  \caption{The CDF of $Y_2(p)$ for all primes $p \in [100,10200]$ (light red), versus $u \mapsto \rho \left(\frac{1}{2u} \right)^2$ (blue).}
  \label{dickmanrho}
\end{figure}


\begin{conj}[Precise form]\label{preciseform}
For any fixed prime $\ell$, 
\[\lim_{X \to \infty}\frac{\# \{ \text{primes } p \in [X,2X]: Y_\ell(p)  \leq y \}  }{\#  \{\text{primes } p \in [X,2X]  \}} = \rho \left( \frac{1}{2y} \right)^2.\]
where $\rho$ is the Dickman function.
\end{conj}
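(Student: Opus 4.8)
The plan is to establish Conjecture \ref{preciseform} \emph{conditionally}, by turning the heuristic computation that immediately precedes it into a genuine deduction from the random permutation model of Conjecture \ref{randompermutationmodelimprecise} together with classical facts about longest cycles. First I would invoke Conjecture \ref{randompermutationmodelimprecise} in the form asserting that, as $p$ ranges over primes, the factorization type of $\widetilde{\Phi}_{p,\ell}$ over $\FF_p$ is distributed like the cycle type of $\sigma_+\sqcup\sigma_-$, where $\sigma_+$ and $\sigma_-$ are \emph{independent} uniformly random permutations on $D_p^+$ and $D_p^-$ letters. Under this dictionary, $\deg_{\FF_p}^*\widetilde{\Phi}_{p,\ell}^\pm$, the largest degree of an irreducible factor coming from the $\pm$ Atkin--Lehner eigenspace, corresponds precisely to the longest cycle length of $\sigma_\pm$.

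The second step converts longest-cycle statistics into the Dickman function. By Granville's result \cite{Granville}, for a uniform random permutation on $N$ letters the probability that the longest cycle has length at most $N/u$ tends to $\rho(u)$ as $N\to\infty$. Because the class-number bounds recalled in \S\ref{Intro} give $D_p^\pm=\tfrac12 D_p\,(1+o(1))$, the event $Y_\ell^\pm(p)\le y$ --- namely, longest cycle of $\sigma_\pm$ at most $y D_p\approx 2y\,D_p^\pm$ --- corresponds to the Dickman threshold $u=\tfrac{1}{2y}$, so that $\Pr[Y_\ell^\pm(p)\le y]\to\rho\!\left(\tfrac{1}{2y}\right)$. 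Since $Y_\ell(p)=\max\{Y_\ell^+(p),Y_\ell^-(p)\}$ and $\sigma_+,\sigma_-$ are independent, the two events decouple and
\[\Pr[Y_\ell(p)\le y]=\Pr[Y_\ell^+(p)\le y]\cdot\Pr[Y_\ell^-(p)\le y]\longrightarrow\rho\!\left(\tfrac{1}{2y}\right)^2.\]
Averaging this over the primes $p\in[X,2X]$ and letting $X\to\infty$ then yields the stated limit.

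The decisive obstacle is that the entire argument rests on Conjecture \ref{randompermutationmodelimprecise}, which is itself open and supplies the genuinely arithmetic input: one must show that the reduced Hecke algebras $\TT_{\pm,\mathrm{red}}$ equidistribute as random étale $\FF_p$-algebras in the sense of \cite{Lip}, and do so uniformly enough to survive the averaging over $p$. No current technique delivers such equidistribution of Frobenius/Hecke factorization data. Even granting the model, two points require care. First, Granville's theorem is naturally a single-permutation $N\to\infty$ statement, whereas here $D_p^\pm$ grows \emph{and} one averages over $p$; interchanging these limits requires a quantitative form of the longest-cycle asymptotic with error uniform in $N$. Second, as flagged in the footnote to Conjecture \ref{randompermutationmodelimprecise}, the statistic actually appearing in $Y_\ell$ is the longest irreducible factor of the characteristic polynomial of $T_\ell$, which a priori could be \emph{shorter} than the largest simple factor of $\TT_{\pm,\mathrm{red}}$ should the image of $T_\ell$ land in a proper subfield; bounding the probability of this degeneracy and verifying that it does not perturb the limiting CDF is the remaining technical step.
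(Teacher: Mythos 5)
Your conditional derivation is exactly the heuristic the paper itself gives for this statement: since it is a conjecture, the paper offers no proof, only the deduction from Conjecture \ref{randompermutationmodelimprecise} plus the Dickman-function statistics of longest cycles (with $D_p^\pm\approx\tfrac12 D_p$ giving the threshold $u=\tfrac{1}{2y}$) and independence of the two Atkin--Lehner eigenspaces yielding the square $\rho\bigl(\tfrac{1}{2y}\bigr)^2$, supported by numerical evidence. Your additional caveats (uniformity in the averaging over $p$, and the possible degeneracy of $T_\ell$ landing in a proper subfield) correctly identify the gaps the paper itself flags in its footnote, so your write-up is a faithful, appropriately hedged account of the same approach.
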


For example, Conjecture \ref{preciseform} would predict that for a randomly chosen large prime $p$, $S_{p+1}(\Gamma(1);\QQ)$ admits a simple Hecke submodule of dimension at least $\ge\frac{1}{8}\dim_\QQ S_{p+1}(\Gamma(1);\QQ)$ with probability $\ge 0.99997$.

\section*\thanks{{\bf Acknowledgements.} We would like to thank Peter Sarnak and Akshay Venkatesh for their encouragement. We are specifically grateful to Akshay Venkatesh for pointing out how to obtain the class number bound in the proof of Corollary \ref{logloglowerboundsmallsplitprime}.}


\begin{thebibliography}{10}
\bibitem{Artin}
E.\ Artin. \emph{Collected papers}. Edited by S.\ Lang and J.\ Tate. Addison-Wesley, London, 1965.

\bibitem{AL}
A.\ O.\ L.\ Atkin, J.\ Lehner.  \emph{Hecke operators on $\Gamma_0(m)$.} Math.\ Ann.\ {\bf 185} (1970) 134--160.


\bibitem{Brumer}
A.\ Brumer. \emph{The rank of $J_0(N)$}. Ast\'{e}risque {\bf 228} (1995) 41--68.

\bibitem{Buzzard}
K.\ Buzzard. \emph{On the eigenvalues of the Hecke operator $T_2$}.  J.\ Number Theory {\bf 57} (1996) 130--132.

\bibitem{CL}
H.\ Cohen, H.\ Lenstra.  ``Heuristics on class groups of number fields'' in {\it Proceedings of Journ\'ees Arithm\'etiques, Noordwijkerhout 1983}, 33--62. Lecture Notes in Math.\ {\bf 1068}. Springer, Berlin, 1984.

\bibitem{CF}J.\ B.\ Conrey, D.\ W.\ Farmer. ``Hecke operators and the nonvanishing of L-functions'' in {\it Topics in Number Theory, in Honor of B.\ Gordon and S.\ Chowla}, 143--150. Math.\ Appl.\ {\bf 467}. Kluwer Acad.\ Publ., Dordrecht, 1999.

\bibitem{DT}
L.\ Dieulefait, P.\ Tsaknias. \emph{Possible connection between a generalized Maeda's conjecture and local types}. arXiv: 1608.05285.  Preprint.

\bibitem{EV}
J.\ S.\ Ellenberg, A.\ Venkatesh. {\it Reflection principles and bounds for class group torsion.} Int.\ Math.\ Res.\ Not.\ (2007) ID: rnm002.

\bibitem{Emerton}
M.\ Emerton.  \emph{Optimal quotients of modular Jacobians}. Math.\ Ann.\ {\bf 327} (2003) 429--458.

\bibitem{FJ}
D.\ W.\ Farmer, K.\ James. \emph{The irreducibility of some level 1 Hecke polynomials}.  Math.\  Comp.\  {\bf 71} (2002) 1263--1270.

\bibitem{Ghitza}
A.\ Ghitza, A.\ McAndrew. \emph{Experimental evidence for Maeda's conjecture on modular forms}. Homology, Homotopy and Applications (2012).

\bibitem{Granville}
A.\ Granville.  \emph{The anatomy of integers and permutations}.  Preprint (2008).

\bibitem{HV}
H.\ A.\ Helfgott, A.\ Venkatesh.  \emph{Integral points on elliptic curves and 3-torsion in class groups}. J.\  Amer.\  Math.\  Soc.\ {\bf 19} (2006) 527--550.

\bibitem{HM}
H.\ Hida, Y.\ Maeda. \emph{Non-abelian base change for totally real fields}. Pacific J.\  Math.\ (1997) Special Issue, 189--217.


\bibitem{Hooley}C.\ Hooley, {\it On Artin's conjecture}. J.\ Reine\ Angew.\ Math.\ {\bf 225} (1967) 209--220.

\bibitem{ILS}
Iwaniec, Luo, Sarnak. \emph{Low lying zeros of families of L-functions.} Inst. Hautes Études Sci. Publ. Math. No. 91 (2000), 55-131 (2001).

\bibitem{Katz}
N.\ Katz.  ``$p$-adic properties of modular schemes and modular forms'' in {\it Modular functions of one variable, III (Proc.\ Internat.\ Summer School, Univ. Antwerp, Antwerp, 1972)}. Lecture Notes in Math.\ {\bf 350} (1973) 69--190. Springer, Berlin.

\bibitem{Khare}
C.\ Khare.  \emph{Modularity of Galois representations and motives with good reduction properties}.  J. Ramanujan Math. Soc. {\bf 22} (2007) 75--100.

\bibitem{KZ}
W.\ Kohnen, D.\ Zagier.  \emph{Values of L-series of modular forms at the center of the critical strip}. Invent.\ Math.\ {\bf 64} (1981) 175--198.

\bibitem{Lip}
M.\ Lipnowski.  \emph{On Bhargava's heuristics for $\GL_2(\FF_p)$-number fields and elliptic curves of bounded conductor}.  Preprint.

\bibitem{Matthews}
C.\ R.\ Matthews.  \emph{Counting points modulo $p$ for some finitely generated subgroups of algebraic groups}. Bull.\ London Math.\ Soc.\ {\bf 14} (1982) 149--154. 

\bibitem{MS}
M.\ R.\ Murty, K.\ Sinha.  \emph{Effective equidistribution of eigenvalues of Hecke operators}. J.\ Number Theory {\bf 129} (2009) 681--714.


\bibitem{Royer}
E.\ Royer.  \emph{Facteurs $\QQ$-simples de $J_0(N)$ de grande dimension et de grand rang}.  Bull.\ Soc.\ Math.\ France {\bf 128} (2000) 219--248.

\bibitem{Schaeffer}
G.\ J.\ Schaeffer. \emph{Hecke stability and weight $1$ modular forms}. Math Z. {\bf 281} (2015) 159--191.

\bibitem{Serrecong}
J.-P.\ Serre. ``Formes modulaires et fonctions z\^{e}ta $p$-adiques'' in {\it Modular functions of one variable, III (Proc. Internat. Summer School, Univ. Antwerp, 1972)}. Lecture Notes in Math. {\bf 350} (1973) 191--268. Springer, Berlin.

\bibitem{Serre}
J.-P.\ Serre. \emph{R\'{e}partition asymptotique des valeurs propres de l'op\'{e}rateur de Hecke $T_p$.} J.\ Amer.\ Math.\ Soc.\ {\bf 10} (1997) 75--102. 

\bibitem{Sound}
K.\ Soundararajan.  \emph{The number of imaginary quadratic fields with a given class number}. Hardy-Ramanujan J.\ {\bf 30} (2007) 13--18.

\bibitem{T}
P.\ Tsaknias.  ``A possible generalization of Maeda's conjecture'' in {\it Computations with modular forms}. Contrib. Math. Comput. Sci. {\bf 6} (2014) 317--329. Springer, Heidelberg.




\end{thebibliography}
\end{document}